\documentclass[
	english
]{scrartcl}

\usepackage[utf8]{inputenc}

\usepackage{amsmath,amsfonts,amsthm,amssymb}
\usepackage[colorlinks,citecolor=blue]{hyperref}
\usepackage{cancel}
\usepackage{relsize}
\usepackage{algorithmic}

\usepackage{changes}

\usepackage{enumitem}
\setlist[enumerate]{label=(\alph*)}
\usepackage{pdfrender}

\usepackage[figure]{hypcap}
\usepackage{graphicx}
\graphicspath{{./img/}}
\usepackage{subcaption} 

\usepackage{preamble}

\usepackage{marginnote}

\usepackage[capitalise,nameinlink]{cleveref}
\allowdisplaybreaks

\numberwithin{equation}{section}

\crefname{assumption}{Assumption}{Assumptions}
\Crefname{ALC@unique}{Step}{Steps}

\newcommand\norm[1]{\left\Vert#1\right\Vert}
\newcommand\nnorm[1]{\Vert#1\Vert}

\newcommand\innerprod[2]{\left\langle #1, #2\right\rangle}
\newcommand\ninnerprod[2]{\langle #1, #2\rangle}

\newcommand\N{\mathbb{N}}
\newcommand\R{\mathbb{R}}

\newcommand\B{\mathbb{B}}

\newcommand\barR{\overline{\mathbb R}}

\newcommand\tto{\rightrightarrows}

\newcommand{\gph}{\operatorname{gph}}
\newcommand{\epi}{\operatorname{epi}}

\newcommand{\xb}{\bar x}

\renewcommand{\d}{\mathrm d}

\newcommand{\proxprenormal}{\widetilde{N}^{\textup{p}}}
\newcommand{\proxnormal}{N^{\textup{p}}}
\newcommand{\regnormal}{\widehat{N}}
\newcommand{\normal}{N}

\newcommand{\proxpresub}{\widetilde{\partial}^{\textup{p}}}
\newcommand{\proxsub}{\partial^{\textup{p}}}

\newcommand{\regsub}{\widehat{\partial}}
\newcommand{\sub}{\partial}
\newcommand{\subgeo}{\partial_{\textup{g}}}
\newcommand{\proxpresubgeo}{\widetilde{\partial}^{\textup{p}}_{\textup{g}}}
\newcommand{\proxsubgeo}{\partial^{\textup{p}}_{\textup{g}}}

\DeclareMathAlphabet{\mathpzc}{OT1}{pzc}{m}{it}
\newcommand\oo{\mathpzc{o}}

\newtheorem{theorem}{Theorem}[section]
\newtheorem{lemma}[theorem]{Lemma}
\newtheorem{proposition}[theorem]{Proposition}

\newtheorem{corollary}[theorem]{Corollary}
\newtheorem{remark}[theorem]{Remark}
\newtheorem{definition}[theorem]{Definition}
\newtheorem{example}[theorem]{Example}

\AddToHook{env/lemma/begin}{\crefalias{theorem}{lemma}}
\AddToHook{env/proposition/begin}{\crefalias{theorem}{proposition}}
\AddToHook{env/algorithm/begin}{\crefalias{theorem}{algorithm}}
\AddToHook{env/assumption/begin}{\crefalias{theorem}{assumption}}
\AddToHook{env/remark/begin}{\crefalias{theorem}{remark}}
\AddToHook{env/example/begin}{\crefalias{theorem}{example}}
\AddToHook{env/corollary/begin}{\crefalias{theorem}{corollary}}
\AddToHook{env/definition/begin}{\crefalias{theorem}{definition}}
\AddToHook{env/setting/begin}{\crefalias{theorem}{setting}}
\AddToHook{env/conjecture/begin}{\crefalias{theorem}{conjecture}}

\makeatletter
\long\def\@firstoffiveparen#1#2#3#4#5{\textup{\tagform@{#1}}}
\def\eqref@nolink#1{\textup{\tagform@{\ref*{#1}}}}
\def\eqref@link#1{%
\Hy@safe@activestrue
\expandafter\@setref\csname r@#1\endcsname\@firstoffiveparen{#1}%
\Hy@safe@activesfalse
}
\protected\def\eqref{\@ifstar\eqref@nolink\eqref@link}
\makeatother

\newif\ifshowcomments
\showcommentstrue 

\definecolor{mygreen}{rgb}{0.0,0.7,0.0}
\definecolor{mybrown}{rgb}{0.5,0.5,0.0}

\usepackage{mathtools}

\begin{document}

\title{%
	On directional local minimality and directional optimality conditions in nonsmooth optimization
	}%
\author{%
	Timm Baake%
	\footnote{%
		Philipps-Universität Marburg,
		Department of Mathematics and Computer Science,
		35032 Marburg,
		Germany,
		\email{baake@uni-marburg.de},
		\orcid{0009-0000-6068-465X}%
		}%
	\and
	Patrick Mehlitz%
	\footnote{%
		Philipps-Universität Marburg,
		Department of Mathematics and Computer Science,
		35032 Marburg,
		Germany,
		\email{mehlitz@uni-marburg.de},
		\orcid{0000-0002-9355-850X}%
		}%
	}%

\publishers{}
\maketitle

\begin{abstract}
	This paper considers the unconstrained minimization of a lower semicontinuous function. 
	Exploiting first and second subderivatives, directional limiting subdifferentials,
	and directional proximal subdifferentials,
	necessary and sufficient first- and second-order optimality conditions
	are derived that build upon the recently introduced notion of 
	directional local minimality. 
	These results then also yield optimality conditions for conventional nondirectional local minimality
	which are stated in terms of so-called critical directions
	and variational objects depending on them.
	Illustrative examples show that the derived conditions allow for a finer analysis than
	classical nondirectional optimality conditions.
\end{abstract}

\begin{keywords}	
	Directional variational analysis,
	nonsmooth optimization,
	optimality conditions
\end{keywords}

\begin{msc}	
	\mscLink{49J52}, \mscLink{49J53}, \mscLink{90C46}
\end{msc}

\section{Introduction}\label{sec:intro}

Nonsmooth optimization has become a major topic in mathematical programming
due to numerous underlying applications in data science and machine learning
where, usually, nondifferentiable regularizers are employed to promote certain
desirable behavior of solutions, like sparsity patterns or bounded variation.
This paper addresses some fundamental aspects of nonsmooth optimization, 
namely first- and second-order optimality conditions for the unconstrained minimization of
a merely lower semicontinuous (lsc for short) function.
Exemplary, these can be stated in terms of subderivatives on the primal side and subdifferentials
on the dual side, see \cite{RockafellarWets1998} for a comprehensive study
and diverse historical notes, and this is also the path we are following here.
A striking first-order necessary optimality condition is Fermat's rule
which claims that the origin belongs to the subdifferential (in a certain sense) of the function of interest
at each local minimizer.
In this paper, we are going to refine this insight.

It has been realized in \cite{Gfrerer2013} that local minimizers of rather general constrained optimization problems
satisfy stationarity conditions with respect to so-called critical directions if only a rather mild
metric subregularity assumption holds in the respective critical direction of interest.
The underlying exploited variational tools, so-called directional limiting subdifferentials and coderivatives,
allow for a finer analysis than their nondirectional counterparts, resulting in sharper conditions.
These observations marked the starting point for a line of research
that concerns the derivation of these so-called directional optimality conditions
for diverse mathematically challenging problem classes like
bilevel optimization problems, 
see e.g.\ \cite{BaiYe2022,BaiYeZeng2025},
or optimization problems with geometric constraints,
see e.g.\ \cite{BenkoMehlitz2024,GfrererYeZhou2022,KaemingMehlitz2026,OuyangYeZhang2025},
with a particular emphasis on problems with disjunctive constraints,
see e.g.\ 
\cite{BenkoGfrererYeZhangZhou2023,BenkoGfrererYeZhangZhou2026,ChenLiuDaiKoebis2025,Gfrerer2014,LahoussineElIdrissiElYahyaoui2026}.
Most of these conditions are based on the aforementioned directional limiting tools of variational analysis,
see \cite{BenkoGfrererOutrata2019,LongWangYang2017,NgocVanVan2026} for an overview and a comprehensive study of their calculus.
However, one may also work with the so-called directional proximal tools of variational analysis, 
see e.g.\ \cite{BenkoGfrererYeZhangZhou2023,BenkoGfrererYeZhangZhou2026,BenkoMehlitz2023},
which take local second-order information into account.
Let us note that it is possible to distill directional limiting versions of Fermat's rule
from \cite[Corollary~4.1]{BenkoMehlitz2024} and \cite[Theorem~7(ii)]{Gfrerer2013},
but these results require local Lipschitzness of the underlying function.
The Fermat-type results stated in \cite[Section~5]{GinchevMordukhovich2012}
are based on a different notation of a directional limiting subdifferential 
which is weaker (in the sense of set inclusion) than the one from \cite{Gfrerer2013}
as pointed out in \cite[Section~5]{LongWangYang2017}.

The present paper has been motivated by the recently coined notion of directional local minimality,
see \cite[Definition~3.1]{OuyangYeZhang2025}, which replaces the conventional neighborhood of the
reference point in the usual definition of local minimality by a directional one.
It is clear by definition that conventional local minimality is the same as directional local minimality
in all unit directions, see \cref{lem:relation}, but this rather obvious insight allows to address
necessary and sufficient conditions for local minimality direction-wise.
We elaborate on primal first- and second-order necessary and sufficient optimality conditions
for directional local minimizers in terms of first and second subderivatives,
see \cref{prop:primal_fo_NOC,prop:primal_fo_SOC,prop:primal_so_NOC,prop:primal_so_SOC},
which are also used to characterize directional versions of the first- and second-order growth conditions.
Furthermore, we are concerned with the derivation of dual Fermat-type
necessary optimality conditions for the characterization of directional local minimizers
based on the directional limiting and proximal subdifferentials,
a topic which, apart from the exceptions mentioned earlier, has not yet been addressed in the literature to the best of our knowledge.
The corresponding findings are stated in \cref{thm:dual_NOC_limiting,thm:unconstrained_necessary_prox}
and provide the main results of this paper. 
Via the aforementioned relationship between conventional and directional local minimality,
our insights also allow for the derivation of directional optimality conditions that address the
standard notion of local minimality.
Throughout, illustrative examples are included to accompany our theoretical investigations
and to show the benefits of incorporating directional information into optimality conditions.
We emphasize that our considerations are not related to
directed subdifferentials, see e.g.\ \cite{BaierFarkhiRoshchina2012a,BaierFarkhiRoshchina2012b}, 
or directional convexificators, see e.g.\ \cite{DempePilecka2015}.

The remainder of the paper is structured as follows.
In \cref{sec:notation}, we comment on the exploited notation as well as the underlying concepts 
from variational analysis and generalized differentiation.
\cref{sec:primal_conditions} is concerned with the derivation of primal first- and second-order
necessary and sufficient optimality conditions based on first and second subderivatives.
Dual Fermat-type necessary optimality conditions, 
which build on the directional limiting and proximal subdifferentials, 
are investigated in \cref{sec:dual_conditions}.
\cref{sec:conclusions} closes the paper, 
summarizing the obtained findings and drafting directions of future research.

\section{Notation and preliminaries}\label{sec:notation}

The notation in this paper is fairly standard and follows, e.g., the one in \cite{RockafellarWets1998}.

\subsection{Fundamental notation}

Throughout the paper, $\N$ will denote the set of natural numbers without $0$, 
and the extended real line will be represented by $\barR:=\R\cup\{-\infty,\infty\}$.
We equip $\R^n$ with the Euclidean norm $\|\cdot\|$ and the Euclidean inner product $\innerprod{ \cdot }{ \cdot }$.
Throughout the paper, $\mathbb S:=\{u\in\R^n\,|\,\norm{u}=1\}$ is the unit sphere.
Given $u\in\R^n$, $\{u\}^\bot:=\{x^*\in\R^n\,|\,\innerprod{x^*}{u}=0\}$ is the annihilator of $u$.
For simplicity of notation,
$\bar x+\Omega:=\Omega+\bar x:=\{\bar x+x\in\R^n\,|\,x\in\Omega\}$ is exploited 
whenever $\Omega \subset \R^n$ is a set and $\bar x \in \R^n$ is arbitrary.
For $\varepsilon > 0$ and $\bar x \in \R^n$, we refer to
$\B_\varepsilon(\bar x):=\{x\in\R^n\,|\,\norm{x-\bar x}\leq\varepsilon\}$ as the closed ball of radius $\varepsilon$ around $\bar x$.
For $\varepsilon,\rho>0$ and a direction $u \in \R^n$,
\[
	\mathbb B_{\varepsilon,\rho}(u)
	:=
	\bigl\{w\in\mathbb B_\varepsilon(0)\,\bigl|\,
		\bigl\Vert \norm{w}u-\norm{u}w\bigr\Vert\leq\rho\,\norm{u}\norm{w}
	\bigr\}
\]
is used to represent a directional neighborhood of $u$. 
We note that $\B_{\varepsilon,\rho}(0)=\B_\varepsilon(0)$
and $\bar x + \mathbb B_{\varepsilon,\rho}(u)\subset\mathbb B_\varepsilon(\bar x)$.
There will be no distinction between a sequence and its elements, i.e.,
some sequence $\{x_k\}_{k \in \N}\subset\R^n$ is simply represented by $x_k$.
We use $x_k\to\bar x$ to denote the convergence of $x_k$ to $\bar x\in\R^n$.
Given a scalar sequence $t_k$, we exploit $t_k\downarrow 0$ to express that $t_k$ converges to $0$ from above.
We use $\oo\colon [0, \infty) \to \R$ to denote a function with the property $\oo(t)/t\to 0$ as $t \downarrow 0$.
An lsc function $\varphi\colon\R^n\to\barR$ 
is called proper if $\varphi(x)>-\infty$ holds for all $x\in\R^n$ and $\varphi(\bar x)<\infty$ is true for some $\bar x\in\R^n$.
Whenever $\Omega\subset\R^n$ is a nonempty and closed set,
we use $\iota_\Omega\colon\R^n\to\barR$ to denote its indicator function 
which vanishes on $\Omega$ and is set to $\infty$ on $\R^n\setminus\Omega$.
Clearly, $\iota_\Omega$ is a proper lsc function.

\subsection{Tools from variational analysis}

Here, we first comment on primal tools, which are based on the well-known tangent cone,
before addressing dual tools, which build upon the proximal, regular, and limiting normal cones.

\paragraph{Tangent cones and subderivatives}

Let $\Omega \subset \R^n$ be a set, and fix $\xb \in \Omega$. 
We refer to
\begin{equation*}
	T_\Omega(\xb)
	:=
	\{
	u \in \R^n
	\,|\,
	\exists t_k\downarrow 0,u_k\to u\colon
	 \xb + t_k u_k \in \Omega\ \forall k \in \N
	\}
\end{equation*}
as the tangent cone to $\Omega$ at $\xb$.
It is well known that $T_\Omega(\bar x)$ is a closed cone.

Let $\varphi\colon \R^n\to \overline{\R}$ be an lsc function, 
and fix $\bar x\in\R^n$ such that $|\varphi(\bar x)|<\infty$.
The graphical derivative of $\varphi$ at $\bar x$ is defined as the set-valued mapping
$D\varphi(\xb)\colon \R^n \tto \R$ given by
\begin{equation*}
	\forall u\in\R^n\colon\quad
	D\varphi(\bar x)(u)
	:= 
	\{
	v \in \R^m \,|\,
	(u, v) \in T_{\gph \varphi}((\xb, \varphi(\bar x)))
	\},
\end{equation*}
where $\gph\varphi:=\{(x,\varphi(x))\in\R^n\times\R\,|\,x\in\R^n,\,|\varphi(x)|<\infty\}$ 
denotes the graph of $\varphi$.
The function $\d \varphi(\xb)\colon\R^n\to\barR$ defined by
\begin{equation*}
		\forall u\in\R^n\colon\quad
		\d \varphi(\xb)(u)
		:=
		\liminf_{t\downarrow 0, u'\to u}
		\frac{\varphi(\xb + t u') - \varphi(\xb)}{t}
\end{equation*}
is called the first subderivative of $\varphi$ at $\xb$.
Observe that $\epi\d\varphi(\bar x)=T_{\epi\varphi}((\bar x,\varphi(\bar x)))$
is valid, where
$\epi\varphi:=\{(x,\alpha)\in\R^n\times\R\,|\,\varphi(x)\leq\alpha\}$
is the so-called epigraph of $\varphi$.
This relation also shows that $\d\varphi(\bar x)$ is an lsc function
which is positive homogeneous.
Given $x^*\in\R^n$, the so-called second subderivative of $\varphi$ at $\xb$ for $x^*$ is the lsc function $\d^2 \varphi(\xb;x^*)\colon\R^n\to\barR$ defined by
\begin{equation*}
		\forall u\in\R^n\colon\quad
		\d^2 \varphi(\xb; x^*)(u)
		:=
		\liminf_{t\downarrow 0, u'\to u}
		\frac{\varphi(\xb + t u') - \varphi(\xb) - t\langle x^*, u'\rangle}{\frac{1}{2}t^2}
		.
\end{equation*}
\begin{lemma}\label{lem:fst_subder_in_graphical_der}
	Let $\varphi\colon \R^n \to \barR$ be an lsc function,
	and fix $\xb \in \R^n$ such that $|\varphi(\xb)| < \infty$
	as well as some direction $u \in \R^n$ such that $|\d\varphi(\bar x)(u)|<\infty$. 
	Then we have
	\begin{equation*}
		\d\varphi(\xb)(u) \in D\varphi(\xb)(u).
	\end{equation*}
\end{lemma}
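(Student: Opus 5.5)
The plan is to unfold the definition of the first subderivative as a $\liminf$ and take a realizing sequence. Set $\alpha:=\d\varphi(\xb)(u)\in\R$. By definition of the $\liminf$, there exist sequences $t_k\downarrow 0$ and $u_k\to u$ such that
\[
	\alpha_k:=\frac{\varphi(\xb+t_ku_k)-\varphi(\xb)}{t_k}\to\alpha .
\]
Since $\alpha$ is finite, $\alpha_k$ is finite for all sufficiently large $k$; dropping finitely many indices, we may assume this for all $k\in\N$. Because $|\varphi(\xb)|<\infty$, finiteness of $\alpha_k$ gives $|\varphi(\xb+t_ku_k)|<\infty$. Hence $x_k:=\xb+t_ku_k$ satisfies $(x_k,\varphi(x_k))\in\gph\varphi$, and we have the identity
\[
	(x_k,\varphi(x_k))=(\xb,\varphi(\xb))+t_k(u_k,\alpha_k).
\]

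Since $(u_k,\alpha_k)\to(u,\alpha)$ and $t_k\downarrow 0$, the definition of the tangent cone yields $(u,\alpha)\in T_{\gph\varphi}((\xb,\varphi(\xb)))$. By the definition of the graphical derivative, this means $\alpha\in D\varphi(\xb)(u)$, which is the claim.

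The only delicate point is the existence of a realizing sequence together with the finiteness of the function values along it, and both are handled above. For the first, a $\liminf$ over $t\downarrow0$, $u'\to u$ is always attained along some sequence by a standard diagonal argument. For the second, the finiteness assumptions on $\d\varphi(\xb)(u)$ and $\varphi(\xb)$ rule out the points $\xb+t_ku_k$ leaving the graph, since at such points $\varphi=\pm\infty$. Lower semicontinuity of $\varphi$ is not needed for this argument.
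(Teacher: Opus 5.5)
Your proof is correct and follows the paper's own argument essentially step for step. You take a sequence realizing the $\liminf$ and discard finitely many indices so that $\varphi(\bar x+t_ku_k)$ is finite. You then write the graph points as $(\bar x,\varphi(\bar x))+t_k(u_k,\alpha_k)$ and conclude from the definitions of the tangent cone and the graphical derivative.
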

\begin{proof}
	Let $t_k \downarrow 0$ and $u_k \to u$ be sequences such that 
	\[
		\frac{\varphi(\bar x + t_ku_k)-\varphi(\bar x)}{t_k} \to \d\varphi(\bar x)(u).
	\]
	Note that this implies $|\varphi(\bar x + t_ku_k)|<\infty$ for all $k\in\N$
	potentially after merely considering the tail of the sequence.
	For each $k\in\N$, define 
	\[
		\nu_k := \frac{\varphi(\xb + t_k u_k) - \varphi(\xb)}{t_k}.
	\] 
	Then we have $\nu_k \to \d\varphi(\xb)(u)$, and
	$\varphi(\xb) + t_k \nu_k = \varphi(\xb + t_k u_k)$ holds for all $k\in\N$. Hence,
	for each $k\in\N$,
	$(\xb, \varphi(\xb)) + t_k (u_k, \nu_k) \in \gph \varphi$ is obtained,
	so that
	$(u, \d\varphi(\xb)(u)) \in T_{\gph\varphi}((\xb, \varphi(\xb)))$ follows.
	By definition of the graphical derivative, the assertion is obtained.
\end{proof}

\paragraph{Normal cones and subdifferentials}

Let $\Omega \subset \R^n$ be a closed set, and fix $\xb \in \Omega$. We refer to
\begin{align*}
	\proxnormal_\Omega(\xb)
	&:=
	\{x^* \in \R^n \,|\, 
	\exists \varepsilon,\gamma > 0,
	\forall x \in \Omega \cap \B_\varepsilon(\xb)
	\colon 
	\langle x^*, x - \xb\rangle \leq \gamma \|x - \xb\|^2\},\\
	\regnormal_\Omega(\xb)
	&:=
	\{x^* \in \R^n\,|\,\forall x \in \Omega\colon \langle x^*, x - \xb\rangle \leq o(\|x - \xb\|)\},
	\\
	\normal_\Omega(\xb)
	&:=
	\{x^* \in \R^n\,|\,
	\exists x_k \to \xb, x^*_k \to x^*\colon 
	x_k\in\Omega,\,x_k^* \in \regnormal_\Omega(x_k)\ 
	\forall k \in \N\}
\end{align*}
as the proximal, regular, and limiting normal cone to $\Omega$ at $\xb$, respectively,
and all of them are closed cones.
We also note that $\proxnormal_\Omega(\bar x)$ and $\regnormal_\Omega(\bar x)$ are convex.
It is well known that the inclusions 
$\proxnormal_\Omega(\bar x)\subset\regnormal_\Omega(\bar x)\subset\normal_\Omega(\bar x)$
hold true.
For the purpose of completeness, we set
$\proxnormal_\Omega(\tilde x) := \regnormal_\Omega(\tilde x) := \normal_\Omega(\tilde x) := \emptyset$ 
whenever $\tilde x \not \in \Omega$.

Let $\varphi\colon \R^n\to \overline{\R}$ be an lsc function, 
and fix $\bar x\in\R^n$ such that $|\varphi(\bar x)|<\infty$.
We refer to
\begin{align*}
	\proxsub \varphi(\xb)
	&:=
	\left\{
	x^* \in \R^n\ 
	\middle|\ 
	\begin{aligned}	
	&\exists \varepsilon,\gamma > 0,
	\forall x \in \B_\varepsilon(\xb)\colon \\
	&\quad\varphi(x) \geq \varphi(\xb) + \langle x^*, x - \xb\rangle -\gamma \|x - \xb\|^2
	\end{aligned}
	\right\},\\
	\regsub \varphi(\xb)
	&:=
	\{x^* \in \R^n\,|\,\forall x \in \R^n\colon 
	\varphi(x) \geq \varphi(\xb) +\langle x^*, x - \xb\rangle - \oo(\|x - \xb\|)\},
	\\
	\sub \varphi(\xb)
	&:=
	\{x^* \in \R^n\,|\,
	\exists x_k \to \xb,x^*_k \to x^*\colon \varphi(x_k)\to\varphi(\xb),\  
	x_k^* \in \regsub \varphi(x_k)\ \forall k \in \N\}
\end{align*}
as the proximal, regular, and limiting subdifferential of $\varphi$ at $\xb$, respectively.
These subdifferentials possess the equivalent representations
\begin{align*}
	\proxsub\varphi(\bar x)
	&=
	\{x^*\in\R^n\,|\,(x^*,-1)\in\proxnormal_{\epi\varphi}((\bar x,\varphi(\bar x)))\},
	\\
	\regsub\varphi(\bar x)
	&=
	\{x^*\in\R^n\,|\,(x^*,-1)\in\regnormal_{\epi\varphi}((\bar x,\varphi(\bar x)))\},
	\\
	\sub\varphi(\bar x)
	&=
	\{x^*\in\R^n\,|\,(x^*,-1)\in\normal_{\epi\varphi}((\bar x,\varphi(\bar x)))\},
\end{align*}
see \cite[Theorem~8.9, Proposition~8.46(a)]{RockafellarWets1998},
so the relations $\proxsub \varphi(\xb) \subset \regsub \varphi(\xb) \subset \sub \varphi(\xb)$
follow from the respective ones for the proximal, regular, and limiting normal cones.
Whenever $\varphi$ is convex,
all three subdifferentials coincide 
and reduce to the subdifferential of convex analysis, see \cite[Proposition~8.12]{RockafellarWets1998}.

\begin{remark}\label{rem:prox_sub_diff_and_2nd_subder}
	Let $\varphi\colon\R^n\to\barR$ be an lsc function, 
	and fix $\bar x\in\R^n$ such that $|\varphi(\bar x)|<\infty$.
	Furthermore, pick $x^*\in\proxsub\varphi(\bar x)$.
	Then we have $\d\varphi(\bar x)(u)\geq \innerprod{x^*}{u}$ and $\d^2\varphi(\bar x;x^*)(u)>-\infty$
	for all $u\in\R^n$.
\end{remark}

\section{Directional local minimality and primal optimality conditions}\label{sec:primal_conditions}

This section is dedicated to the introduction of directional local minimality,
see \cref{sec:dir_loc_min},
as well as the derivation of primal first- and second-order necessary and sufficient
optimality conditions that address directional local minimality,
based on first and second subderivatives, see \cref{sec:fo_primal,sec:so_primal}.
We emphasize that the findings from \cref{sec:fo_primal,sec:so_primal}
are not new per se as closely related results can be found in
\cite[Propositions~3.99,~3.100]{BonnansShapiro2000}.
However, focusing on directional local minimality
throws some further light on these findings,
and in order to keep the presentation self-contained, 
we decided to state the straightforward proofs here.

\subsection{The concept of directional local minimality}\label{sec:dir_loc_min}

To start, let us state the definition of directional local minimality
which has been inspired by \cite[Definition~3.1]{OuyangYeZhang2025}
where a more specific setting has been addressed.

\begin{definition}\label{def:directional_local_minimality}
	Let $\varphi\colon\R^n\to\barR$ be an lsc function,
	and let $u\in\R^n$ be fixed.
	Some point $\bar x\in\R^n$ such that $|\varphi(\bar x)|<\infty$ is referred to as a local minimizer of $\varphi$
	in direction $u$ if there exist constants $\varepsilon,\rho>0$ such that
	\begin{equation}\label{eq:dir_loc_min}
		\forall x\in\xb+\mathbb B_{\varepsilon,\rho}(u)\colon\quad
		\varphi(x)\geq\varphi(\bar x)
	\end{equation}
	holds. If the stronger condition
	\[
		\forall x\in\xb+\mathbb B_{\varepsilon,\rho}(u)\colon\quad
		x\neq\bar x\quad\Longrightarrow\quad\varphi(x)>\varphi(\bar x)
	\]
	is valid, $\bar x$ is called a strict local minimizer of $\varphi$ in direction $u$.
\end{definition}

For $u:=0$ in \cref{def:directional_local_minimality}, 
the conventional notion of a (strict) local minimizer is recovered.
Furthermore, whenever $u\neq 0$ and $\bar x$ is a (strict) local minimizer of $\varphi$
in direction $u$, then $\bar x$ is a (strict) local minimizer of $\varphi$ in direction
$\alpha u$ for each $\alpha>0$.
Hence, w.l.o.g.\ we may consider merely directions $u\in\mathbb S$
when addressing (strict) directional local minimality.

In the lemma below, we relate the directional notion of local minimality 
from \cref{def:directional_local_minimality} with its commonly used nondirectional counterpart.

\begin{lemma}\label{lem:relation}
	Let $\varphi\colon\R^n\to\barR$ be an lsc function,
	and fix $\bar x\in\R^n$ such that $|\varphi(\bar x)|<\infty$.
	Then the point $\bar x$ is a (strict) local minimizer of $\varphi$
	if and only if, for each $u\in\mathbb S$, 
	it is a (strict) local minimizer of $\varphi$ in direction $u$.
\end{lemma}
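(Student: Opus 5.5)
The forward implication is immediate from the inclusion $\bar x+\mathbb B_{\varepsilon,\rho}(u)\subset\mathbb B_\varepsilon(\bar x)$ noted in \cref{sec:notation}. If $\bar x$ is a (strict) local minimizer with radius $\varepsilon>0$, then for every $u\in\mathbb S$ and every $\rho>0$ (say $\rho:=1$), the condition \eqref{eq:dir_loc_min} (respectively its strict version) holds on $\bar x+\mathbb B_{\varepsilon,\rho}(u)$. This is simply because it holds on the larger set $\mathbb B_\varepsilon(\bar x)$.

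For the converse I would use a compactness argument on the unit sphere. Suppose that for each $u\in\mathbb S$ there are $\varepsilon_u,\rho_u>0$ such that the directional (strict) minimality condition holds on $\bar x+\mathbb B_{\varepsilon_u,\rho_u}(u)$. For $u,w\in\mathbb S$ the defining inequality of $\mathbb B_{\varepsilon,\rho}(u)$ for a nonzero $w$ reduces, after dividing by $\norm{w}$, to $\norm{u-w/\norm{w}}\leq\rho$. Hence
\[
\mathbb B_{\varepsilon,\rho}(u)=\{0\}\cup\{w\in\mathbb B_\varepsilon(0)\setminus\{0\}\,|\,\norm{w/\norm{w}-u}\leq\rho\}.
\]
Now set $U_u:=\{v\in\mathbb S\,|\,\norm{v-u}<\rho_u\}$. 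These relatively open sets cover the compact set $\mathbb S$, so finitely many of them, say $U_{u_1},\dots,U_{u_m}$, already cover $\mathbb S$. Let $\varepsilon:=\min_i\varepsilon_{u_i}>0$.

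It then remains to check that this $\varepsilon$ works. Any $x\in\mathbb B_\varepsilon(\bar x)$ with $x\neq\bar x$ satisfies $w:=x-\bar x\neq 0$ and $w/\norm{w}\in U_{u_i}$ for some $i$. Consequently $w\in\mathbb B_{\varepsilon_{u_i},\rho_{u_i}}(u_i)$, which yields $\varphi(x)\geq\varphi(\bar x)$, with strict inequality in the strict case. The point $x=\bar x$ is trivial. Hence $\bar x$ is a (strict) local minimizer.

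There is no real obstacle here. The only step needing care is the reformulation of $\mathbb B_{\varepsilon,\rho}(u)$ for unit $u$ as a cone of directions intersected with a ball; that reformulation is what makes the finite subcover argument (and the choice of the minimal radius) go through.
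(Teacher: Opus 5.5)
Your proof is correct, but the converse implication follows a different route from the paper's.

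\textbf{Your route.} You argue directly. First you rewrite $\mathbb B_{\varepsilon,\rho}(u)$, for $u\in\mathbb S$, as $\mathbb B_\varepsilon(0)$ intersected with a closed cone of directions, together with the origin. You then cover $\mathbb S$ by the open caps $U_u$ and extract a finite subcover. Finally, you glue the finitely many directional neighborhoods into a full ball of radius $\varepsilon=\min_i\varepsilon_{u_i}$.

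\textbf{The paper's route.} The paper argues by contraposition using sequential compactness. It takes a sequence $x_k\to\bar x$ with $x_k\neq\bar x$ that violates (strict) minimality, normalizes it as in \eqref{eq:standard_transfer_directional_setting}, and extracts a cluster point $u\in\mathbb S$ of the $u_k$. Since $x_k=\bar x+t_ku_k$ with $t_k\downarrow 0$ and $u_k\to u$, these points eventually lie in every $\bar x+\mathbb B_{\varepsilon,\rho}(u)$. Hence $\bar x$ fails to be a (strict) local minimizer in direction $u$.

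\textbf{Comparison.} Your version gives an explicit uniform radius and makes the cone geometry of the directional neighborhoods visible. The paper's version is shorter and, more importantly, exhibits a concrete offending direction $u$. That is why the same template is reused in \cref{lem:fo_growth_condition,lem:so_growth_condition} and \cref{prop:unconstrained_necessary_prox_relation}, where something must be said about the particular failing direction. A covering argument would also handle the growth lemmas, by taking $\kappa:=\min_i\kappa_{u_i}$. For \cref{prop:unconstrained_necessary_prox_relation}, however, where only critical directions are assumed, it would need an extra step treating noncritical directions via \cref{lem:char_fo_growth_condition}.

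A minor slip: you write ``for $u,w\in\mathbb S$'' where you mean $u\in\mathbb S$ and $w\neq 0$.
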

\begin{proof}
	The implication $[\Longrightarrow]$ is obvious.
	Hence, let us prove the converse implication $[\Longleftarrow]$.
	Therefore, we assume that $\bar x$ is not a (strict) local minimizer of
	$\varphi$. Then we find a sequence $x_k\to\bar x$ such that $x_k\neq\bar x$
	and $\varphi(x_k)<\varphi(\bar x)$ ($\varphi(x_k)\leq\varphi(\bar x)$)
	hold for each $k\in\N$. Set
	\begin{equation}\label{eq:standard_transfer_directional_setting}
		\forall k\in\N\colon\quad
		t_k:=\nnorm{x_k-\bar x},\qquad u_k:=\frac{x_k-\bar x}{t_k}.
	\end{equation}
	Then $t_k\downarrow 0$ and, along a subsequence (without relabeling),
	$u_k\to u$ for some $u\in\mathbb S$.
	Noting that we have $x_k=\bar x+t_ku_k$ for each $k\in\N$,
	$\bar x$ cannot be a (strict) local minimizer of $\varphi$ in direction $u$.
\end{proof}

\subsection{First-order optimality conditions}\label{sec:fo_primal}

To start, we state primal first-order necessary
optimality conditions based on the notion of the first subderivative.

\begin{proposition}\label{prop:primal_fo_NOC}
	Let $\varphi\colon\R^n\to\barR$ be an lsc function,
	and fix $\bar x\in\R^n$ such that $|\varphi(\bar x)|<\infty$.
	Then the following assertions hold.
	\begin{enumerate}
		\item\label{item:primal_fo_NOC_dir}
		Pick $u\in\mathbb S$.
			If $\bar x$ is a local minimizer of $\varphi$ in direction $u$,
			then $\d\varphi(\bar x)(u)\geq 0$ is valid.
		\item\label{item:primal_fo_NOC_full} 
			If $\bar x$ is a local minimizer of $\varphi$,
			then $\d\varphi(\bar x)(u)\geq 0$ is valid for all $u\in\mathbb S$.
	\end{enumerate}
\end{proposition}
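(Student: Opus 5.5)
The plan is to prove \ref{item:primal_fo_NOC_dir} directly from the definition of the first subderivative and the directional neighborhood, and then to deduce \ref{item:primal_fo_NOC_full} from \ref{item:primal_fo_NOC_dir} via \cref{lem:relation}.

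For \ref{item:primal_fo_NOC_dir}, fix $u\in\mathbb S$ and let $\varepsilon,\rho>0$ be the constants from \eqref{eq:dir_loc_min}. Take arbitrary sequences $t_k\downarrow 0$ and $u_k\to u$; it suffices to show that $\varphi(\bar x+t_ku_k)\geq\varphi(\bar x)$ for all sufficiently large $k$. Then every difference quotient in the definition of $\d\varphi(\bar x)(u)$ is eventually nonnegative, and so is the $\liminf$.

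The only thing to check is that $w_k:=t_ku_k\in\mathbb B_{\varepsilon,\rho}(u)$ for large $k$. First, $\nnorm{w_k}=t_k\nnorm{u_k}\to 0$, so eventually $w_k\in\mathbb B_\varepsilon(0)$. Second, if $u_k\neq0$ (true for large $k$ since $u_k\to u\neq0$), then using $\nnorm{u}=1$ we get
\[
	\bigl\Vert \nnorm{w_k}u-\nnorm{u}w_k\bigr\Vert
	= t_k\bigl\Vert \nnorm{u_k}u-u_k\bigr\Vert .
\]
Since $\nnorm{u_k}u-u_k\to u-u=0$ while $\nnorm{u_k}\to1$, the right-hand side is eventually at most $\rho\, t_k\nnorm{u_k}=\rho\nnorm{u}\nnorm{w_k}$. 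Hence $\bar x+w_k\in\bar x+\mathbb B_{\varepsilon,\rho}(u)$, and \eqref{eq:dir_loc_min} gives $\varphi(\bar x+t_ku_k)\geq\varphi(\bar x)$. Dividing by $t_k>0$ and passing to the $\liminf$ yields $\d\varphi(\bar x)(u)\geq 0$.

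For \ref{item:primal_fo_NOC_full}, \cref{lem:relation} shows that a local minimizer is a local minimizer in every direction $u\in\mathbb S$, so \ref{item:primal_fo_NOC_dir} applies to each such $u$. Alternatively, this follows directly because $\bar x+\mathbb B_{\varepsilon,\rho}(u)\subset\mathbb B_\varepsilon(\bar x)$.

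There is no real obstacle here. The only mildly delicate step is the membership $t_ku_k\in\mathbb B_{\varepsilon,\rho}(u)$, which needs the normalization $\nnorm{u}=1$ and $u_k\neq 0$ for large $k$.
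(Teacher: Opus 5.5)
Your proposal is correct and follows essentially the same route as the paper: for (a), sequences $t_k u_k$ eventually lie in $\mathbb B_{\varepsilon,\rho}(u)$, so the difference quotients are eventually nonnegative; (b) then follows from (a) via \cref{lem:relation}. The only difference is that you explicitly verify the membership $t_ku_k\in\mathbb B_{\varepsilon,\rho}(u)$, which the paper simply asserts.
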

\begin{proof}
	For the proof of the first assertion, we pick constants $\varepsilon,\rho>0$
	such that \eqref{eq:dir_loc_min} is valid.
	Pick arbitrary sequences $t_k\downarrow 0$ and $u_k\to u$.
	Then, for large enough $k\in\N$, we have $t_ku_k\in\mathbb B_{\varepsilon,\rho}(u)$,
	so that \eqref{eq:dir_loc_min} yields
	\[
		\liminf\limits_{k\to\infty}\frac{\varphi(\bar x+t_ku_k)-\varphi(\bar x)}{t_k}\geq 0.
	\]
	Hence, $\d\varphi(\bar x)(u)\geq 0$ follows by definition of the first subderivative,
	and the first assertion has been shown.
	The second assertion follows from the first one 
	while keeping \cref{lem:relation} in mind.
\end{proof}

\begin{remark}\label{rem:primal_fo_NOC_full_zero}
	Let $\varphi\colon\R^n\to\barR$ be an lsc function,
	fix $\bar x\in\R^n$ such that $|\varphi(\bar x)|<\infty$,
	and let $\bar x$ be a local minimizer of $\varphi$.
	Using similar arguments like in the proof of 
	\cref{prop:primal_fo_NOC}, 
	we find $\d\varphi(\xb)(0) \geq 0$.
	Hence, $\d\varphi(\bar x)(u)\geq 0$ holds for all $u\in\R^n$. 
\end{remark}

Next, we are going to state first-order sufficient optimality conditions
which are based on the first subderivative.
In order to do so, we employ the notion of (directional) first-order growth.

\begin{definition}\label{def:fo_growth_condition}
	Let $\varphi\colon\R^n\to\barR$ be an lsc function,
	and fix $\bar x\in\R^n$ such that $|\varphi(\bar x)|<\infty$
	as well as some direction $u\in\R^n$.
	We say that the first-order growth condition holds for $\varphi$ in direction $u$ at $\bar x$ 
	whenever there exist constants $\varepsilon,\rho,\kappa>0$
	such that
	\begin{equation}\label{eq:fo_growth_condition}
		\forall x\in\xb+\mathbb B_{\varepsilon,\rho}(u)\colon\quad
		\varphi(x)\geq\varphi(\bar x) + \kappa\norm{x-\bar x}
	\end{equation}
	is valid.
	Whenever the above holds for $u:=0$ (which makes the parameter $\rho$ meaningless),
	we say that the first-order growth condition holds for $\varphi$ at $\bar x$.
\end{definition}

Similar to the concept of directional local minimality from \cref{def:directional_local_minimality},
we may restrict the directions in \cref{def:fo_growth_condition} to be chosen from $\mathbb S$
whenever the directional version of the first-order growth condition is employed.

Our first result provides a characterization of the directional first-order growth condition
from \cref{def:fo_growth_condition} in terms of the first subderivative.
\begin{lemma}\label{lem:char_fo_growth_condition}
	Let $\varphi\colon\R^n\to\barR$ be an lsc function,
	and fix $\bar x\in\R^n$ such that $|\varphi(\bar x)|<\infty$ 
	as well as some direction $u\in\mathbb S$.
	Then the first-order growth condition holds for $\varphi$ in direction $u$ at $\bar x$ 
	if and only if $\d\varphi(\bar x)(u)>0$ is valid.
\end{lemma}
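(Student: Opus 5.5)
We prove the two implications separately. For $[\Longrightarrow]$, we fix constants $\varepsilon,\rho,\kappa>0$ such that \eqref{eq:fo_growth_condition} holds, and we take arbitrary sequences $t_k\downarrow 0$ and $u_k\to u$. As in the proof of \cref{prop:primal_fo_NOC}, we have $t_ku_k\in\mathbb B_{\varepsilon,\rho}(u)$ for all large $k$. Indeed, $\norm{t_ku_k}\le\varepsilon$ eventually. Moreover, with $w:=t_ku_k$ we get $\bigl\Vert\norm{w}u-w\bigr\Vert=t_k\bigl\Vert\norm{u_k}u-u_k\bigr\Vert$, and $\bigl\Vert\norm{u_k}u-u_k\bigr\Vert\to 0<\rho$ while $\norm{u_k}\to1$. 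Hence \eqref{eq:fo_growth_condition} gives
\[
\frac{\varphi(\bar x+t_ku_k)-\varphi(\bar x)}{t_k}\ge\kappa\norm{u_k}\to\kappa .
\]
Taking the $\liminf$ over all such sequences yields $\d\varphi(\bar x)(u)\ge\kappa>0$.

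For $[\Longleftarrow]$, we argue by contradiction, following the scheme of \cref{lem:relation}. Suppose the growth condition fails in direction $u$. Choosing $\varepsilon=\rho=\kappa=1/k$, we obtain points $w_k\in\mathbb B_{1/k,1/k}(u)$ with
\[
\varphi(\bar x+w_k)<\varphi(\bar x)+\tfrac1k\norm{w_k}.
\]
The case $w_k=0$ is impossible, since it would give $\varphi(\bar x)<\varphi(\bar x)$. So $w_k\neq 0$, and we set $t_k:=\norm{w_k}$ and $u_k:=w_k/t_k$ as in \eqref{eq:standard_transfer_directional_setting}. Then $t_k\le 1/k$, so $t_k\downarrow0$ after passing to a subsequence if needed, and $\norm{u_k}=1$.

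The key step is to extract $u_k\to u$ from the directional neighborhood condition. Dividing the defining inequality of $\mathbb B_{1/k,1/k}(u)$ by $\norm{w_k}$ and using $\norm{u}=1$, we get $\norm{u-u_k}\le 1/k$, hence $u_k\to u$. Then
\[
\frac{\varphi(\bar x+t_ku_k)-\varphi(\bar x)}{t_k}<\frac1k ,
\]
so $\d\varphi(\bar x)(u)\le\liminf_k\frac1k=0$, which contradicts $\d\varphi(\bar x)(u)>0$.

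This argument needs no finiteness assumption on $\d\varphi(\bar x)(u)$: the value $+\infty$ is covered in both directions. The only point needing care is the precise unpacking of the definition of $\mathbb B_{\varepsilon,\rho}(u)$ with normalized $u$, and it is routine.
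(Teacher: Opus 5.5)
Your proof is correct and takes essentially the same route as the paper's. In the forward direction you check that $t_ku_k\in\mathbb B_{\varepsilon,\rho}(u)$ eventually and pass the growth estimate to the $\liminf$; in the converse you construct $t_k\downarrow 0$ and $u_k\to u$ with difference quotients below $1/k$, forcing $\d\varphi(\bar x)(u)\leq 0$. The paper leaves implicit the step of normalizing $w_k\in\mathbb B_{1/k,1/k}(u)$ to get $\nnorm{u_k-u}\leq 1/k$, which you spell out explicitly.
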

\begin{proof}
	For the proof of the implication $[\Longrightarrow]$,
	we assume that there are constants $\varepsilon,\rho,\kappa>0$
	such that \eqref{eq:fo_growth_condition} holds.
	Next, we pick arbitrary sequences $t_k\downarrow 0$ and $u_k\to u$.
	Then, for large enough $k\in\N$, we have $t_ku_k\in\mathbb B_{\varepsilon,\rho}(u)$,
	so that \eqref{eq:fo_growth_condition} yields
	\[
		\varphi(\bar x+t_ku_k)\geq\varphi(\bar x) + \kappa\,t_k\norm{u_k}.
	\]
	Hence,
	\[
		\liminf\limits_{k\to\infty}\frac{\varphi(\bar x+t_ku_k)-\varphi(\bar x)}{t_k}
		\geq
		\kappa
	\]
	follows from $u\in\mathbb S$,
	and $\d\varphi(\bar x)(u)\geq\kappa>0$ is obtained.
	\\
	For the proof of the converse implication $[\Longleftarrow]$,
	let us suppose that the first-order growth condition does not hold for $\varphi$
	in direction $u$ at $\bar x$.
	Then we find sequences $t_k\downarrow 0$ and $u_k\to u$ such that
	\[
		\forall k\in\N\colon\quad
		\varphi(\bar x + t_ku_k) < \varphi(\bar x) + \frac1k\,t_k\norm{u_k},
	\]
	and
	\[
		\d\varphi(\bar x)(u)
		\leq
		\liminf\limits_{k\to\infty}\frac{\varphi(\bar x+t_ku_k)-\varphi(\bar x)}{t_k}
		\leq
		\liminf\limits_{k\to\infty}\frac{\norm{u_k}}{k}
		=
		0
	\]
	follows.
\end{proof}

The upcoming lemma characterizes the nondirectional first-order growth condition
in terms of the directional one.

\begin{lemma}\label{lem:fo_growth_condition}
	Let $\varphi\colon\R^n\to\barR$ be an lsc function,
	and fix $\bar x\in\R^n$ such that $|\varphi(\bar x)|<\infty$.
	Then the first-order growth condition holds for $\varphi$ at $\bar x$
	if and only if, for each $u\in\mathbb S$, 
	the first-order growth condition holds for $\varphi$ in direction $u$ at $\bar x$.
\end{lemma}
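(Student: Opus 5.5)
The forward implication $[\Longrightarrow]$ is immediate. Suppose \eqref{eq:fo_growth_condition} holds for $u:=0$ with constants $\varepsilon,\kappa>0$. For any $u\in\mathbb S$ and any $\rho>0$ we have $\bar x+\mathbb B_{\varepsilon,\rho}(u)\subset\mathbb B_\varepsilon(\bar x)$, as noted in the preliminaries. Hence the same estimate holds on the directional neighborhood, with the same $\varepsilon$ and $\kappa$ and an arbitrary $\rho$.

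For the converse $[\Longleftarrow]$, I will argue by contraposition, mimicking the proof of \cref{lem:relation}. Assume the first-order growth condition fails for $\varphi$ at $\bar x$. Then for each $k\in\N$ there is $x_k\in\mathbb B_{1/k}(\bar x)$ with
\[
	\varphi(x_k)<\varphi(\bar x)+\tfrac1k\norm{x_k-\bar x}.
\]
This forces $x_k\neq\bar x$, since otherwise we would get $\varphi(\bar x)<\varphi(\bar x)$. Set $t_k:=\nnorm{x_k-\bar x}$ and $u_k:=(x_k-\bar x)/t_k$ as in \eqref{eq:standard_transfer_directional_setting}. Then $t_k\downarrow 0$ along a subsequence, and, after passing to a further subsequence, $u_k\to u$ for some $u\in\mathbb S$ by compactness of $\mathbb S$.

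From the defining inequality we get
\[
	\frac{\varphi(\bar x+t_ku_k)-\varphi(\bar x)}{t_k}<\frac1k.
\]
Taking the $\liminf$ gives $\d\varphi(\bar x)(u)\le 0$. By \cref{lem:char_fo_growth_condition}, the first-order growth condition then fails in direction $u$ at $\bar x$, which contradicts the hypothesis.

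Alternatively, one can avoid the subderivative and check directly that $x_k-\bar x=t_ku_k$ eventually lies in $\mathbb B_{\varepsilon,\rho}(u)$ for any given $\varepsilon,\rho>0$. Indeed, $\nnorm{\norm{t_ku_k}u-\norm{u}t_ku_k}=t_k\nnorm{u-u_k}\le\rho t_k$ for large $k$. Any candidate constant $\kappa$ would then be violated once $1/k<\kappa$. Either route works; the only mildly delicate point is extracting the convergent subsequence of directions and verifying membership in the directional neighborhood, which is routine. I do not expect a genuine obstacle.
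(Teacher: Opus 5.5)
Your proof is correct and follows the paper's argument essentially step for step. Both use contraposition via a violating sequence $x_k\to\bar x$ with $x_k\neq\bar x$, normalize it as in \eqref{eq:standard_transfer_directional_setting}, extract a limit direction $u\in\mathbb S$, derive $\d\varphi(\bar x)(u)\le 0$, and invoke \cref{lem:char_fo_growth_condition}. Your alternative check that $t_ku_k\in\mathbb B_{\varepsilon,\rho}(u)$ for large $k$ is also valid; it just inlines the converse half of that lemma.
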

\begin{proof}
	The implication $[\Longrightarrow]$ is obvious.
	Hence, let us prove the converse implication $[\Longleftarrow]$.
	Therefore, we assume that the first-order growth condition does not hold
	for $\varphi$ at $\bar x$. 
	Then we find a sequence $x_k\to\bar x$ such that 
	\[
		\forall k\in\N\colon\quad
		\varphi(x_k)<\varphi(\bar x) + \frac1k\nnorm{x_k-\bar x}
	\]
	holds. Note that this yields $x_k\neq\bar x$ for all $k\in\N$. Let us define sequences $t_k$ and $u_k$ as in
	\eqref{eq:standard_transfer_directional_setting}.
	Then $t_k\downarrow 0$ and, along a subsequence (without relabeling),
	$u_k\to u$ for some $u\in\mathbb S$.
	Noting that we have $x_k=\bar x+t_ku_k$ for each $k\in\N$,
	we find
	\[
		\d\varphi(\bar x)(u)
		\leq
		\liminf\limits_{k\to\infty}\frac{\varphi(\bar x+t_ku_k)-\varphi(\bar x)}{t_k}
		\leq
		\liminf\limits_{k\to\infty}\frac{1}{k}
		=
		0,
	\]
	so that \cref{lem:char_fo_growth_condition} shows that the
	first-order growth condition for $\varphi$ in direction $u$ at $\bar x$ fails.
\end{proof}

Combining \cref{lem:char_fo_growth_condition,lem:fo_growth_condition},
we find the following primal first-order sufficient optimality conditions.

\begin{proposition}\label{prop:primal_fo_SOC}
	Let $\varphi\colon\R^n\to\barR$ be an lsc function,
	and fix $\bar x\in\R^n$ such that $|\varphi(\bar x)|<\infty$.
	Then the following assertions hold.
	\begin{enumerate}
		\item\label{item:primal_fo_SOC_dir} 
			Pick $u\in\mathbb S$.
			If $\d\varphi(\bar x)(u)>0$ holds,
			then the first-order growth condition holds for $\varphi$ in direction $u$ at $\bar x$.
			Particularly, $\bar x$ is a strict local minimizer of $\varphi$ in direction $u$.
		\item\label{item:primal_fo_SOC_full} 
			If $\d\varphi(\bar x)(u)>0$ holds for all $u\in\mathbb S$,
			then the first-order growth condition holds for $\varphi$ at $\bar x$.
			Particularly, $\bar x$ is a strict local minimizer of $\varphi$.
	\end{enumerate}
\end{proposition}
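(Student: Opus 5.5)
The plan is to obtain both assertions almost directly from \cref{lem:char_fo_growth_condition} and \cref{lem:fo_growth_condition}. The only extra ingredient is the observation that first-order growth forces strict local minimality.

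For assertion \ref{item:primal_fo_SOC_dir}, fix $u\in\mathbb S$ with $\d\varphi(\bar x)(u)>0$. The implication $[\Longleftarrow]$ of \cref{lem:char_fo_growth_condition} then provides constants $\varepsilon,\rho,\kappa>0$ such that \eqref{eq:fo_growth_condition} holds. For any $x\in\bar x+\mathbb B_{\varepsilon,\rho}(u)$ with $x\neq\bar x$ we have $\kappa\norm{x-\bar x}>0$, so \eqref{eq:fo_growth_condition} gives $\varphi(x)>\varphi(\bar x)$. By \cref{def:directional_local_minimality}, this says that $\bar x$ is a strict local minimizer of $\varphi$ in direction $u$.

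For assertion \ref{item:primal_fo_SOC_full}, assume $\d\varphi(\bar x)(u)>0$ for every $u\in\mathbb S$. By the first assertion, the first-order growth condition holds in every direction $u\in\mathbb S$. The implication $[\Longleftarrow]$ of \cref{lem:fo_growth_condition} then yields the nondirectional first-order growth condition at $\bar x$. This means there are $\varepsilon,\kappa>0$ with $\varphi(x)\geq\varphi(\bar x)+\kappa\norm{x-\bar x}$ for all $x\in\mathbb B_\varepsilon(\bar x)$. Strict local minimality follows exactly as before, since $x\neq\bar x$ forces $\varphi(x)>\varphi(\bar x)$. Alternatively, one could combine the directional strictness from the first assertion with \cref{lem:relation}.

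There is no real obstacle here; the argument is mere bookkeeping. The substantive compactness step, namely passing from direction-wise growth to uniform growth, is already contained in \cref{lem:fo_growth_condition}. The only care needed is to apply the converse implications of the two lemmas and to use $u\in\mathbb S$ (i.e., $u\neq 0$), as \cref{lem:char_fo_growth_condition} requires.
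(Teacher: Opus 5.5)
Your proposal is correct and matches the paper, which obtains this proposition by combining \cref{lem:char_fo_growth_condition} and \cref{lem:fo_growth_condition}. Your explicit remark that $\kappa\norm{x-\bar x}>0$ for $x\neq\bar x$ yields (directional) strict local minimality is exactly the step the paper leaves implicit.
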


Let us note that \cref{lem:char_fo_growth_condition} and \cref{prop:primal_fo_SOC}
are closely related to \cite[Proposition~3.99]{BonnansShapiro2000}.
However, our findings refine this result as they incorporate directional information.

\subsection{Second-order optimality conditions}\label{sec:so_primal}

Next, we are going to discuss primal second-order optimality conditions
based on the second subderivative. 
Therefore, we introduce the so-called critical cone
whose definition is motivated by the findings of \cref{sec:fo_primal}.

\begin{definition}\label{def:critical_cone}
	Let $\varphi\colon\R^n\to\barR$ be an lsc function,
	and fix $\bar x\in\R^n$ such that $|\varphi(\bar x)|<\infty$.
	Then
	\[
		\mathcal C_\varphi(\bar x)
		:=
		\{u\in\R^n\,|\,\d\varphi(\bar x)(u)\leq 0\}
	\]
	is referred to as the critical cone of $\varphi$ at $\bar x$.
\end{definition}

Recalling that the subderivative is an lsc function which is positive homogeneous
as its epigraph is a cone,
the critical cone is a closed cone.

\begin{remark}\label{rem:critical_cone_zero}
	Let $\varphi\colon\R^n\to\barR$ be an lsc function,
	and fix $\bar x\in\R^n$ such that $|\varphi(\bar x)|<\infty$.
	\begin{enumerate}
		\item From the definition of the first subderivative we get that $0 \in \mathcal C_\varphi(\xb)$.
		\item\label{item:critical_cone_zero_alt_repr} 
			In order to check local minimality of $\bar x$ for $\varphi$,
			one may first apply \cref{prop:primal_fo_NOC}.
			If there is $\tilde u\in\mathbb S$ such that $\d\varphi(\bar x)(\tilde u)<0$,
			then $\bar x$ cannot be a local minimizer of $\varphi$.
			Hence, it makes sense to merely consider first-order stationary points, i.e.,
			points $\bar x$ such that $\d\varphi(\bar x)(u)\geq 0$ holds for all $u\in\mathbb S$.
			Then $\{u\in\R^n\,|\,\d\varphi(\bar x)(u)=0\}$ 
			could be exploited to play the role of the critical cone.
			However, throughout the paper,
			we stick to the version which has been defined in \cref{def:critical_cone}.
	\end{enumerate}
\end{remark}

By definition of the critical cone,
we obtain the following corollary of \cref{prop:primal_fo_SOC}.

\begin{corollary}\label{cor:primal_fo_SOC}
	Let $\varphi\colon\R^n\to\barR$ be an lsc function,
	and fix $\bar x\in\R^n$ such that $|\varphi(\bar x)|<\infty$.
	If $\mathcal C_\varphi(\bar x)\cap\mathbb S=\emptyset$ holds,
	then the first-order growth condition holds for $\varphi$ at $\bar x$.
	Particularly, $\bar x$ is a strict local minimizer of $\varphi$.
\end{corollary}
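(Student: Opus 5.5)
The plan is to reduce \cref{cor:primal_fo_SOC} directly to \cref{prop:primal_fo_SOC}\ref{item:primal_fo_SOC_full}. That assertion already gives the first-order growth condition at $\bar x$, and with it strict local minimality, once we know that $\d\varphi(\bar x)(u)>0$ holds for every $u\in\mathbb S$. So the only task is to show that the hypothesis $\mathcal C_\varphi(\bar x)\cap\mathbb S=\emptyset$ is exactly this positivity condition on the unit sphere.

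To see this, fix an arbitrary $u\in\mathbb S$. Since $u\notin\mathcal C_\varphi(\bar x)$, \cref{def:critical_cone} says that $\d\varphi(\bar x)(u)\leq 0$ fails, so $\d\varphi(\bar x)(u)>0$. This includes the value $+\infty$, which is harmless because \cref{prop:primal_fo_SOC} only requires strict positivity in $\barR$. Applying \cref{prop:primal_fo_SOC}\ref{item:primal_fo_SOC_full} then gives the first-order growth condition for $\varphi$ at $\bar x$.

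For the "particularly" part, I would note that it is already contained in \cref{prop:primal_fo_SOC}. Alternatively, it follows directly from growth: $\varphi(x)\geq\varphi(\bar x)+\kappa\norm{x-\bar x}>\varphi(\bar x)$ for all $x\neq\bar x$ in $\B_\varepsilon(\bar x)$.

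I do not expect any real obstacle. The only point needing care is the logical negation of the membership condition for the extended-real-valued subderivative, which is immediate.
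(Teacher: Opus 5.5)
Your proposal is correct and matches the paper's approach. The paper obtains this corollary directly from \cref{prop:primal_fo_SOC}\,\ref{item:primal_fo_SOC_full}, since by \cref{def:critical_cone} the hypothesis $\mathcal C_\varphi(\bar x)\cap\mathbb S=\emptyset$ means exactly that $\d\varphi(\bar x)(u)>0$ for all $u\in\mathbb S$; your remark that the value $+\infty$ is allowed is also accurate.
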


Let us now start to state primal second-order necessary optimality conditions.

\begin{proposition}\label{prop:primal_so_NOC}
	Let $\varphi\colon\R^n\to\barR$ be an lsc function,
	and fix $\bar x\in\R^n$ such that $|\varphi(\bar x)|<\infty$.
	Then the following assertions hold.
	\begin{enumerate}
		\item\label{item:primal_so_NOC_dir} Pick $u\in\mathcal C_\varphi(\bar x)\cap\mathbb S$.
			If $\bar x$ is a local minimizer of $\varphi$ in direction $u$,
			then $\d^2\varphi(\bar x;0)(u)\geq 0$ is valid.
		\item\label{item:primal_so_NOC_full} If $\bar x$ is a local minimizer of $\varphi$,
			then $\d^2\varphi(\bar x;0)(u)\geq 0$ is valid
			for each $u\in\mathcal C_\varphi(\bar x)\cap\mathbb S$.
	\end{enumerate}
\end{proposition}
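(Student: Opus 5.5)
The argument mirrors the proof of \cref{prop:primal_fo_NOC}. Note first that for $x^*:=0$ the second subderivative reads
\[
	\d^2\varphi(\bar x;0)(u)
	=
	\liminf_{t\downarrow 0,\,u'\to u}\frac{\varphi(\bar x+tu')-\varphi(\bar x)}{\tfrac12 t^2},
\]
so only the sign of the numerator near $\bar x$ in a directional neighborhood of $u$ matters. For assertion \ref{item:primal_so_NOC_dir}, we pick constants $\varepsilon,\rho>0$ such that \eqref{eq:dir_loc_min} holds. We then take arbitrary sequences $t_k\downarrow 0$ and $u_k\to u$ and argue that $t_ku_k\in\mathbb B_{\varepsilon,\rho}(u)$ for all sufficiently large $k\in\N$.

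This membership is the only point requiring a brief check. Because $u\in\mathbb S$, we have $\norm{u}=1$, and $\norm{t_ku_k}=t_k\norm{u_k}\to 0$, so $t_ku_k\in\mathbb B_\varepsilon(0)$ eventually. Moreover, $u_k\neq 0$ for large $k$, and
\[
	\bigl\Vert \norm{t_ku_k}u-\norm{u}t_ku_k\bigr\Vert
	=
	t_k\bigl\Vert\norm{u_k}u-u_k\bigr\Vert.
\]
Since $\norm{u_k}u-u_k\to u-u=0$ while $\norm{u_k}\to 1$, this quantity is eventually at most $\rho\,t_k\norm{u_k}$. The same reasoning was already used tacitly in \cref{prop:primal_fo_NOC}.

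With the membership in hand, \eqref{eq:dir_loc_min} yields $\varphi(\bar x+t_ku_k)-\varphi(\bar x)\geq 0$ for large $k$, hence
\[
	\liminf_{k\to\infty}\frac{\varphi(\bar x+t_ku_k)-\varphi(\bar x)}{\tfrac12t_k^2}\geq 0.
\]
Since the sequences were arbitrary, the definition of the second subderivative gives $\d^2\varphi(\bar x;0)(u)\geq 0$. The hypothesis $u\in\mathcal C_\varphi(\bar x)$ is not needed for this argument. It merely restricts the statement to the meaningful directions: for $u\notin\mathcal C_\varphi(\bar x)$ we have $\d\varphi(\bar x)(u)>0$, which forces $\d^2\varphi(\bar x;0)(u)=\infty$ anyway.

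Assertion \ref{item:primal_so_NOC_full} then follows from assertion \ref{item:primal_so_NOC_dir} via \cref{lem:relation}. A local minimizer is a local minimizer in every direction $u\in\mathbb S$, in particular in every $u\in\mathcal C_\varphi(\bar x)\cap\mathbb S$. I do not expect a genuine obstacle; the only delicate step is the directional-neighborhood membership verified above.
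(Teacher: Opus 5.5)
Your proposal is correct and follows essentially the same route as the paper's proof: you use arbitrary sequences $t_k\downarrow 0$, $u_k\to u$, note that $t_ku_k$ eventually lies in $\mathbb B_{\varepsilon,\rho}(u)$ so \eqref{eq:dir_loc_min} makes the second-order quotient nonnegative, and then obtain assertion~\ref{item:primal_so_NOC_full} via \cref{lem:relation}. Your explicit check of the directional-neighborhood membership and your remark that $u\in\mathcal C_\varphi(\bar x)$ is not needed (it only excludes directions where $\d^2\varphi(\bar x;0)(u)=\infty$ anyway) both agree with the paper.
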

\begin{proof}
	For the proof of the first assertion, we pick constants $\varepsilon,\rho>0$ such that \eqref{eq:dir_loc_min} is valid.
	Pick arbitrary sequences $t_k\downarrow 0$ and $u_k\to u$.
	Then, for large enough $k\in\N$, we have $t_ku_k\in\mathbb B_{\varepsilon,\rho}(u)$, so that \eqref{eq:dir_loc_min} yields
	\[
		\liminf\limits_{k\to\infty}\frac{\varphi(\bar x+t_ku_k)-\varphi(\bar x)}{\frac12t_k^2}\geq 0.
	\]
	Hence, $\d^2\varphi(\bar x;0)(u)\geq 0$ follows by definition of the second subderivative,
	and the first assertion has been shown.
	The second assertion follows from the first one while keeping \cref{lem:relation} in mind.
\end{proof}

Let us note that, given a local minimizer $\bar x\in\R^n$ of $\varphi$ in direction $u\in\mathbb S$
such that $|\varphi(\bar x)|<\infty$,
we know $\d\varphi(\bar x)(u)\geq 0$ from \cref{prop:primal_fo_NOC}.
If $\d\varphi(\bar x)(u)>0$ holds, then we immediately find $\d^2\varphi(\bar x;0)(u)=\infty$,
and the second subderivative does not contribute any relevant information.
That is why \cref{prop:primal_so_NOC} only considers directions $u\in\mathcal C_\varphi(\bar x)\cap\mathbb S$.

Next, we discuss second-order sufficient optimality conditions
which are based on the second subderivative. 
Therefore, we employ the concept of (directional) second-order growth.

\begin{definition}\label{def:so_growth_condition}
	Let $\varphi\colon\R^n\to\barR$ be an lsc function,
	and fix $\bar x\in\R^n$ such that $|\varphi(\bar x)|<\infty$
	as well as some direction $u\in\R^n$.
	We say that the second-order growth condition holds for $\varphi$ in direction $u$ at $\bar x$
	whenever there exist constants $\varepsilon,\rho,\kappa>0$ such that
	\begin{equation}\label{eq:so_growth_condition}
		\forall x\in\xb+\mathbb B_{\varepsilon,\rho}(u)\colon\quad
		\varphi(x)\geq\varphi(\bar x) + \kappa\norm{x-\bar x}^2
	\end{equation}
	is valid. Whenever the above holds for $u:=0$ (which makes the parameter $\rho$ meaningless),
	we say that the second-order growth condition holds for $\varphi$ at $\bar x$.
\end{definition}

Similar as above, we will restrict ourselves to directions of unit length when employing
the directional second-order growth condition from \cref{def:so_growth_condition}.
Again, we start our considerations by presenting a characterization
of the directional second-order growth condition via the second subderivative.

\begin{lemma}\label{lem:char_so_growth_condition}
	Let $\varphi\colon\R^n\to\barR$ be an lsc function,
	and fix $\bar x\in\R^n$ such that $|\varphi(\bar x)|<\infty$
	as well as some direction $u\in\mathbb S$.
	Then the second-order growth condition holds for $\varphi$ in direction $u$ at $\bar x$ if and only if
	$\d^2\varphi(\bar x;0)(u)>0$ is valid.
\end{lemma}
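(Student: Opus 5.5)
The argument will mirror the proof of \cref{lem:char_fo_growth_condition}, replacing the first-order difference quotient by the second-order one with $x^*=0$:
\[
	\frac{\varphi(\bar x+tu')-\varphi(\bar x)}{\tfrac12t^2}.
\]

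For the implication $[\Longrightarrow]$, I assume that constants $\varepsilon,\rho,\kappa>0$ exist such that \eqref{eq:so_growth_condition} holds. I then take arbitrary sequences $t_k\downarrow 0$ and $u_k\to u$. For $k$ large, $t_ku_k\in\mathbb B_{\varepsilon,\rho}(u)$: indeed $\norm{t_ku_k}\to 0$, and the angular condition $\bigl\Vert \norm{u_k}u-u_k\bigr\Vert\leq\rho\norm{u_k}$ holds eventually because $u\in\mathbb S$ and $u_k\to u$. This step has already been used in the preceding proofs. The growth estimate then gives $\varphi(\bar x+t_ku_k)-\varphi(\bar x)\geq\kappa t_k^2\norm{u_k}^2$. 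Dividing by $\frac12t_k^2$ and letting $k\to\infty$ yields a $\liminf$ of at least $2\kappa\norm{u}^2=2\kappa$. Since the sequences were arbitrary, $\d^2\varphi(\bar x;0)(u)\geq 2\kappa>0$.

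For the converse $[\Longleftarrow]$, I argue by contraposition. Suppose the second-order growth condition fails in direction $u$. Choosing $\varepsilon=\rho=\kappa=1/k$, I obtain points $w_k\in\mathbb B_{1/k,1/k}(u)$ with
\[
	\varphi(\bar x+w_k)<\varphi(\bar x)+\tfrac1k\norm{w_k}^2.
\]
This forces $w_k\neq 0$. Set $t_k:=\norm{w_k}$ and $u_k:=w_k/t_k$. Then $t_k\downarrow 0$, possibly after passing to a subsequence so that $t_k>0$ is strictly decreasing, which is harmless for a $\liminf$. Moreover, the directional-neighbourhood condition reads $\norm{u-u_k}\leq 1/k$, so $u_k\to u$. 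By the definition of the second subderivative,
\[
	\d^2\varphi(\bar x;0)(u)\leq\liminf_{k\to\infty}\frac{\varphi(\bar x+t_ku_k)-\varphi(\bar x)}{\frac12t_k^2}\leq\liminf_{k\to\infty}\frac{2}{k}=0,
\]
which contradicts $\d^2\varphi(\bar x;0)(u)>0$.

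No step is a genuine obstacle. The only point needing care is unpacking the definition of $\mathbb B_{\varepsilon,\rho}(u)$ in both directions. One must check that $\norm{w}u-\norm{u}w$ reduces to $t(u-u')$ when $w=tu'$, $\norm{u'}=1$ and $\norm{u}=1$. This shows that membership in the neighbourhood is equivalent to $t\leq\varepsilon$ together with $\norm{u-u'}\leq\rho$. For $u'$ not of unit length, as in the first part, $u'\to u$ still makes the normalized angular condition hold eventually.
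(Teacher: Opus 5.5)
Your proof is correct and follows the paper's argument essentially line by line. The forward direction takes arbitrary $t_k\downarrow 0$, $u_k\to u$ that eventually lie in the directional neighbourhood, and the converse is a contraposition producing difference quotients bounded by $2/k$; your explicit choice $\varepsilon=\rho=\kappa=1/k$ and the normalization $u_k=w_k/\Vert w_k\Vert$ simply spell out the step the paper states without detail.
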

\begin{proof}
	For the proof of the implication $[\Longrightarrow]$, we assume that there are constants $\varepsilon,\rho,\kappa>0$
	such that \eqref{eq:so_growth_condition} holds.
	Next, we pick arbitrary sequences $t_k\downarrow 0$ and $u_k\to u$.
	Then, for large enough $k\in\N$, we have $t_ku_k\in\mathbb B_{\varepsilon,\rho}(u)$,
	so that \eqref{eq:so_growth_condition} yields
	\[
		\varphi(\bar x+t_ku_k)\geq\varphi(\bar x)+\kappa\,t_k^2\norm{u_k}^2.
	\]
	Hence, 
	\[
		\liminf\limits_{k\to\infty}\frac{\varphi(\bar x+t_ku_k)-\varphi(\bar x)}{\frac12t_k^2}
		\geq
		2\kappa
	\]
	follows from $u\in\mathbb S$, and $\d^2\varphi(\bar x;0)(u)\geq 2\kappa>0$ is obtained.
	\\
	For the proof of the converse implication $[\Longleftarrow]$, let us suppose that the second-order growth condition
	does not hold for $\varphi$ in direction $u$ at $\bar x$.
	Then we find sequences $t_k\downarrow 0$ and $u_k\to u$ such that
	\[
		\forall k\in\N\colon\quad
		\varphi(\bar x+t_ku_k) < \varphi(\bar x) + \frac1kt_k^2\nnorm{u_k}^2,
	\]
	and
	\[
		\d^2\varphi(\bar x;0)(u)
		\leq
		\liminf\limits_{k\to\infty}\frac{\varphi(\bar x + t_ku_k)-\varphi(\bar x)}{\frac12t_k^2}
		\leq
		\liminf\limits_{k\to\infty}\frac{2\nnorm{u_k}^2}{k}
		=
		0
	\]
	follows.
\end{proof}

Now, we are in position to characterize the nondirectional second-order growth condition in terms of the directional one.

\begin{lemma}\label{lem:so_growth_condition}
	Let $\varphi\colon\R^n\to\barR$ be an lsc function,
	and fix $\bar x\in\R^n$ such that $|\varphi(\bar x)|<\infty$.
	Then the second-order growth condition holds for $\varphi$ at $\bar x$
	if and only if, for each $u\in\mathbb S$, 
	the second-order growth condition holds for $\varphi$ in direction $u$ at $\bar x$.
\end{lemma}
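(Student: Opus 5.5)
The plan mirrors the proof of \cref{lem:fo_growth_condition}, with the first-order objects replaced by their second-order counterparts and \cref{lem:char_fo_growth_condition} replaced by \cref{lem:char_so_growth_condition}.

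For the implication $[\Longrightarrow]$, take the constants $\varepsilon,\kappa>0$ from the nondirectional condition. Since $\bar x+\mathbb B_{\varepsilon,\rho}(u)\subset\mathbb B_\varepsilon(\bar x)$ holds for every $\rho>0$, the inequality \eqref{eq:so_growth_condition} follows in every direction $u\in\mathbb S$ with the same $\varepsilon$ and $\kappa$ and an arbitrary $\rho$.

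For the converse $[\Longleftarrow]$, argue by contraposition and assume that the second-order growth condition fails for $\varphi$ at $\bar x$.
\begin{enumerate}
	\item Choosing $\varepsilon:=1/k$ and $\kappa:=1/k$, we obtain a sequence $x_k\to\bar x$ with
	\[
		\varphi(x_k)<\varphi(\bar x)+\frac1k\nnorm{x_k-\bar x}^2
	\]
	for all $k\in\N$. This inequality forces $x_k\neq\bar x$, since otherwise it would read $\varphi(\bar x)<\varphi(\bar x)$.
	\item Define $t_k$ and $u_k$ as in \eqref{eq:standard_transfer_directional_setting}. Then $t_k\downarrow0$, and after passing to a subsequence, $u_k\to u$ for some $u\in\mathbb S$, by compactness of $\mathbb S$.
	\item Since $x_k=\bar x+t_ku_k$ and $\nnorm{u_k}=1$, we get
	\[
		\d^2\varphi(\bar x;0)(u)
		\leq
		\liminf_{k\to\infty}\frac{\varphi(\bar x+t_ku_k)-\varphi(\bar x)}{\frac12t_k^2}
		\leq
		\liminf_{k\to\infty}\frac{2}{k}
		=
		0.
	\]
	\item By \cref{lem:char_so_growth_condition}, the second-order growth condition then fails for $\varphi$ in direction $u$ at $\bar x$, which completes the contraposition.
\end{enumerate}

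I expect no genuine obstacle here. The only point needing care is to check that the subsequence extraction is compatible with the liminf estimate, and it is, because the bound $2/k$ holds termwise along the subsequence.
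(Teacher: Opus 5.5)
Your proposal is correct and follows the paper's proof essentially verbatim: the forward direction via $\bar x+\mathbb B_{\varepsilon,\rho}(u)\subset\mathbb B_\varepsilon(\bar x)$, and the converse by contraposition using the sequence $x_k$, the normalization \eqref{eq:standard_transfer_directional_setting}, the estimate $\d^2\varphi(\bar x;0)(u)\leq 0$, and \cref{lem:char_so_growth_condition}. Your explicit choice $\varepsilon=\kappa=1/k$ and the remark on the subsequence are harmless added detail.
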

\begin{proof}
	The implication $[\Longrightarrow]$ is obvious.
	Hence, let us prove the converse implication $[\Longleftarrow]$.
	Therefore, we assume that the second-order growth condition does not hold
	for $\varphi$ at $\bar x$. 
	Then we find a sequence $x_k\to\bar x$ such that 
	\[
		\forall k\in\N\colon\quad
		\varphi(x_k)<\varphi(\bar x) + \frac1k\nnorm{x_k-\bar x}^2
	\]
	holds. Note that this yields $x_k\neq\bar x$ for all $k\in\N$. Let us define sequences $t_k$ and $u_k$ as in
	\eqref{eq:standard_transfer_directional_setting}.
	Then $t_k\downarrow 0$ and, along a subsequence (without relabeling),
	$u_k\to u$ for some $u\in\mathbb S$.
	Noting that we have $x_k=\bar x+t_ku_k$ for each $k\in\N$,
	we find
	\[
		\d^2\varphi(\bar x;0)(u)
		\leq
		\liminf\limits_{k\to\infty}\frac{\varphi(\bar x+t_ku_k)-\varphi(\bar x)}{\frac12t_k^2}
		\leq
		\liminf\limits_{k\to\infty}\frac{2}{k}
		=
		0,
	\]
	so that \cref{lem:char_so_growth_condition} shows that the
	second-order growth condition for $\varphi$ in direction $u$ at $\bar x$ fails.
\end{proof}

Combining \cref{lem:char_so_growth_condition,lem:so_growth_condition},
we find the following primal second-order sufficient optimality conditions.

\begin{proposition}\label{prop:primal_so_SOC}
	Let $\varphi\colon\R^n\to\barR$ be an lsc function,
	and fix $\bar x\in\R^n$ such that $|\varphi(\bar x)|<\infty$.
	Then the following assertions hold.
	\begin{enumerate}
		\item\label{item:primal_so_SOC_dir} 
			Pick $u\in\mathcal C_\varphi(\bar x)\cap\mathbb S$.
			If $\d^2\varphi(\bar x;0)(u)>0$ holds,
			then the second-order growth condition holds for $\varphi$ in direction $u$ at $\bar x$.
			Particularly, $\bar x$ is a strict local minimizer of $\varphi$ in direction $u$.
		\item\label{item:primal_so_SOC_full} 
			If $\d^2\varphi(\bar x;0)(u)>0$ holds for all $u\in\mathcal C_\varphi(\bar x)\cap\mathbb S$,
			then the second-order growth condition holds for $\varphi$ at $\bar x$.
			Particularly, $\bar x$ is a strict local minimizer of $\varphi$.
	\end{enumerate}
\end{proposition}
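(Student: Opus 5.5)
Part (a) follows directly from \cref{lem:char_so_growth_condition}. Fix $u\in\mathcal C_\varphi(\bar x)\cap\mathbb S$ with $\d^2\varphi(\bar x;0)(u)>0$. The implication $[\Longleftarrow]$ of that lemma gives constants $\varepsilon,\rho,\kappa>0$ such that \eqref{eq:so_growth_condition} holds, which is the directional second-order growth condition. For strict directional local minimality, take $x\in\bar x+\mathbb B_{\varepsilon,\rho}(u)$ with $x\neq\bar x$. Then $\kappa\norm{x-\bar x}^2>0$, so $\varphi(x)>\varphi(\bar x)$. Criticality of $u$ is not needed for this implication; it only marks the relevant directions.

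For part (b), the plan is to show that the second-order growth condition holds in every direction $u\in\mathbb S$ and then invoke \cref{lem:so_growth_condition}. Split $\mathbb S$ into two cases.
\begin{enumerate}
	\item If $u\in\mathcal C_\varphi(\bar x)\cap\mathbb S$, the hypothesis gives $\d^2\varphi(\bar x;0)(u)>0$, and \cref{lem:char_so_growth_condition} yields directional second-order growth.
	\item If $u\in\mathbb S\setminus\mathcal C_\varphi(\bar x)$, then $\d\varphi(\bar x)(u)>0$. \cref{lem:char_fo_growth_condition} gives $\varepsilon,\rho,\kappa>0$ with $\varphi(x)\geq\varphi(\bar x)+\kappa\norm{x-\bar x}$ on $\bar x+\mathbb B_{\varepsilon,\rho}(u)$. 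Shrink $\varepsilon$ to $\min\{\varepsilon,1\}$, which keeps the directional neighborhood inside the old one. Then $\norm{x-\bar x}\geq\norm{x-\bar x}^2$ there, so directional second-order growth holds with the same $\kappa$.
\end{enumerate}
An alternative for the second case is to observe that $\d\varphi(\bar x)(u)>0$ forces $\d^2\varphi(\bar x;0)(u)=\infty$, as noted after \cref{prop:primal_so_NOC}, and apply \cref{lem:char_so_growth_condition} again. I would use the first-order growth route because it is more elementary.

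With both cases covered, \cref{lem:so_growth_condition} yields the nondirectional second-order growth condition at $\bar x$. Strict local minimality then follows as in part (a), using the ball $\mathbb B_\varepsilon(\bar x)$.

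There is no real obstacle. The only point needing care is the noncritical directions in part (b), since the hypothesis says nothing about them; the case split above handles this.
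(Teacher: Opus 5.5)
Your proposal is correct and follows the paper's route: part (a) is \cref{lem:char_so_growth_condition}, and part (b) combines it with \cref{lem:so_growth_condition}. You handle noncritical directions as the paper's remark after the proposition does, via \cref{lem:char_fo_growth_condition} and the fact that directional first-order growth implies directional second-order growth; your shrinking of $\varepsilon$ to $\min\{\varepsilon,1\}$ simply spells out the step the paper calls ``clear''.
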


Given some point $\bar x\in\R^n$ such that $|\varphi(\bar x)|<\infty$ and some direction $u\in\mathbb S$
which does not belong to $\mathcal C_\varphi(\bar x)$,
we know $\d\varphi(\bar x)(u)>0$, and \cref{lem:char_fo_growth_condition}
shows that the first-order growth condition holds for $\varphi$ in direction $u$ at $\bar x$,
which clearly implies that the second-order growth condition holds for $\varphi$ in direction $u$ at $\bar x$.
Hence, \cref{prop:primal_so_SOC} merely considers directions from $\mathcal C_\varphi(\bar x)\cap\mathbb S$.

We emphasize that \cref{lem:char_so_growth_condition} and \cref{prop:primal_so_SOC}
are closely related to \cite[Proposition~3.100]{BonnansShapiro2000}
which, however, does not take directional information into account.

\section{Dual optimality conditions}\label{sec:dual_conditions}

Here, we are concerned with the derivation of dual Fermat-type first-order necessary optimality conditions
via the directional limiting and proximal subdifferentials,
see, respectively, \cref{sec:dual_conditions_lim,sec:dual_conditions_prox}.
Furthermore, in \cref{sec:dual_conditions_reg}, we comment on the potential notion
of a directional regular subdifferential.

\subsection{Conditions in terms of limiting subgradients}\label{sec:dual_conditions_lim}

To start the subsection, let us recall the definition of the so-called directional limiting normal cone
which dates back to \cite[Definition~2.1]{Gfrerer2013} and \cite[Definition~2.3]{GinchevMordukhovich2011},
see \cite[Section~2]{Gfrerer2014} as well.
For a closed set $\Omega\subset\R^n$, some point $\bar x\in\Omega$, and a direction $u\in\R^n$,
we refer to
\[
	\normal_\Omega(\bar x;u)
	:=
	\left\{
		x^*\in\R^n\,\middle|\,
		\begin{aligned}
		&\exists t_k\downarrow 0,\,u_k\to u,\,x_k^*\to x^*\colon
		\\
		&\quad
		\bar x+t_ku_k\in\Omega,\,x_k^* \in \regnormal_\Omega(\bar x + t_ku_k)\ \forall k\in\N
		\end{aligned}
	\right\}
\]
as the limiting normal cone to $\Omega$ in direction $u$ at $\bar x$.
Note that $\normal_\Omega(\bar x;0)=\normal_\Omega(\bar x)$,
and $\normal_\Omega(\bar x;u)=\emptyset$ holds if $u\notin T_\Omega(\bar x)$.
For the purpose of completeness, we set $\normal_\Omega(\tilde x;u):=\emptyset$ for $\tilde x\notin\Omega$.

Next, we recall the definition of the (analytic) directional limiting subdifferential,
see \cite[Definition~6]{Gfrerer2013} and \cite[Definition~5.1]{LongWangYang2017}.
For an lsc function $\varphi\colon\R^n\to\overline\R$, a point $\bar x\in\R^n$ such that $|\varphi(\bar x)|<\infty$,
and $u \in \R^n$, we refer to
\begin{equation*}
	\sub \varphi(\xb; u)
	:=
	\left\{
	x^* \in \R^n\ 
	\middle|\ 
	\begin{aligned}	
	&\exists t_k \downarrow 0, 
	u_k\to u, 
	x^*_k\to x^*\colon\\
	&\quad\varphi(\xb + t_k u_k)\to\varphi(\xb),
	x^*_k \in \regsub \varphi(\xb + t_k u_k)\ \forall k \in \N
	\end{aligned}
	\right\}
\end{equation*}
as the directional limiting subdifferential of $\varphi$ in direction $u$ at $\bar x$.
By construction, we have $\sub\varphi(\bar x;u)\subset\sub\varphi(\bar x)$.
We note that a slightly different notion of a directional limiting subdifferential 
has been introduced in \cite[Definition~3.1]{GinchevMordukhovich2011},
but the latter did not prevail as it does not take convergence of function values
into account. Similarly, the related construction from \cite[Definition~3.1]{GinchevMordukhovich2012}
did not outlive the tools discussed here.
For additionally given $\nu\in\R$,
\[
	\subgeo \varphi(\bar x;(u,\nu))
	:=
	\left\{
		x^*\in\R^n\,\middle|\,
		(x^*,-1)\in\normal_{\epi\varphi}((\bar x,\varphi(\bar x));(u,\nu))
	\right\}
\]
is referred to as the geometric directional limiting subdifferential of $\varphi$ in direction $(u,\nu)$ at $\bar x$.
Note that the latter object is only reasonable if 
$(u,\nu)\in T_{\epi\varphi}((\bar x,\varphi(\bar x)))=\epi\d\varphi(\bar x)$,
i.e., if $\nu\geq\d\varphi(\bar x)(u)$.
A comprehensive overview of calculus rules for directional limiting normal cones and subdifferentials
can be found in \cite{BenkoGfrererOutrata2019,LongWangYang2017,NgocVanVan2026}.
Exemplary, \cite[Proposition~4.1]{BenkoGfrererOutrata2019} shows that we always have the inclusion
\begin{equation}\label{eq:relation_lim_normals_and_subgradients}
	\bigcup\limits_{\nu\in D\varphi(\bar x)(u)} \subgeo\varphi(\bar x;(u,\nu))
	\subset
	\sub \varphi(\bar x;u),
\end{equation}
and that equality holds in \eqref{eq:relation_lim_normals_and_subgradients} for all $u\in\mathbb S$.

Below, we state dual first-order necessary optimality conditions based on 
geometric directional limiting subgradients.
The presented proof has been inspired by the ones of
\cite[Theorem~3.18]{KaemingMehlitz2026} and \cite[Proposition~3.2]{OuyangYeZhang2025}.

\begin{theorem}\label{thm:dual_NOC_limiting}
	Let $\varphi\colon\R^n\to\barR$ be an lsc function,
	and fix $\bar x\in\R^n$ such that $|\varphi(\bar x)|<\infty$.
	Then the following assertions hold.
	\begin{enumerate}
		\item\label{item:dual_NOC_limiting_dir} 
			Pick $u\in\mathcal C_\varphi(\bar x)\cap\mathbb S$.
			If $\bar x$ is a local minimizer of $\varphi$ in direction $u$,
			then $0\in\subgeo\varphi(\bar x;(u,\d\varphi(\bar x)(u)))$ is valid.
		\item\label{item:dual_NOC_limiting_nondir} 
			If $\bar x$ is a local minimizer of $\varphi$,
			then $0\in\sub\varphi(\bar x)$,
			and $0\in\subgeo\varphi(\bar x;(u,\d\varphi(\bar x)(u)))$ is valid for each $u\in\mathcal C_\varphi(\bar x)\cap\mathbb S$.
	\end{enumerate}
\end{theorem}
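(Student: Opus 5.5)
Since $u\in\mathcal C_\varphi(\bar x)$ and $\bar x$ is a local minimizer in direction $u$, \cref{prop:primal_fo_NOC} gives $\d\varphi(\bar x)(u)\geq 0$, so $\d\varphi(\bar x)(u)=0$. Assertion~\ref{item:dual_NOC_limiting_dir} therefore amounts to $(0,-1)\in\normal_{\epi\varphi}((\bar x,\varphi(\bar x));(u,0))$. The plan is to build a sequence of epigraphical points $(\bar x,\varphi(\bar x))+t_k(u_k,\nu_k)$ with $t_k\downarrow0$, $u_k\to u$, $\nu_k\to0$, carrying regular normals close to $(0,-1)$. I will use a penalization in the spirit of \cite[Proposition~3.2]{OuyangYeZhang2025} combined with a variational principle.

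First, since $\d\varphi(\bar x)(u)=0$, pick $\tau_k\downarrow0$ and $w_k\to u$ with $\varphi(\bar x+\tau_kw_k)-\varphi(\bar x)=\oo(\tau_k)$. By directional minimality this difference is $\geq0$ for large $k$, and $\varphi(\bar x+\tau_kw_k)\to\varphi(\bar x)$. Fix $\varepsilon,\rho$ from \eqref{eq:dir_loc_min}, and set $x_k:=\bar x+\tau_kw_k$. Consider $\psi:=\varphi+\iota_{\bar x+\mathbb B_{\varepsilon,\rho}(u)}$. On the closed set $\bar x+\mathbb B_{\varepsilon,\rho}(u)$ we have $\psi\geq\varphi(\bar x)$, and $x_k$ is an $\epsilon_k$-minimizer of $\psi$ with $\epsilon_k:=\varphi(x_k)-\varphi(\bar x)=\oo(\tau_k)$. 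Apply Ekeland's variational principle with parameter $\lambda_k:=\sqrt{\epsilon_k\tau_k}$; then $\epsilon_k/\lambda_k=\sqrt{\epsilon_k/\tau_k}\to0$ and $\lambda_k/\tau_k\to0$. This yields $z_k$ with $\nnorm{z_k-x_k}\leq\lambda_k=\oo(\tau_k)$, $\psi(z_k)\leq\psi(x_k)$, and $z_k$ minimizes $\psi+(\epsilon_k/\lambda_k)\nnorm{\cdot-z_k}$.

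Next, write $z_k=\bar x+\tau_k u_k'$ with $u_k'\to u$. Hence $z_k$ lies in the interior of the directional neighborhood for large $k$, because the cone condition is strict near $u\neq0$ and $\nnorm{z_k-\bar x}<\varepsilon$. The indicator is then locally inactive, so $z_k$ locally minimizes $\varphi+\delta_k\nnorm{\cdot-z_k}$ with $\delta_k\to0$. Regular Fermat plus the sum rule with a Lipschitz convex term then does not directly give regular subgradients, so I would rather pass to the epigraph. In the epigraph, $(z_k,\varphi(z_k))$ locally minimizes $(x,\alpha)\mapsto\alpha+\delta_k\nnorm{x-z_k}$ over $\epi\varphi$. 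The fuzzy sum rule for regular normals (or a direct limiting argument) then provides points $(y_k,\alpha_k)\in\epi\varphi$ arbitrarily close to $(z_k,\varphi(z_k))$, with closeness chosen $\oo(\tau_k)$, together with $(y_k^*,-\beta_k)\in\regnormal_{\epi\varphi}((y_k,\alpha_k))$, $\nnorm{y_k^*}\leq\delta_k+1/k$, and $\beta_k\to1$. Finally, set $t_k:=\tau_k$, $u_k:=(y_k-\bar x)/\tau_k\to u$, and $\nu_k:=(\alpha_k-\varphi(\bar x))/\tau_k$. Since $0\leq\varphi(z_k)-\varphi(\bar x)\leq\epsilon_k=\oo(\tau_k)$, we get $\nu_k\to0$. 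Scaling the normals by $1/\beta_k$ gives $(0,-1)\in\normal_{\epi\varphi}((\bar x,\varphi(\bar x));(u,0))$.

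The main obstacle is the step producing regular normals to $\epi\varphi$ near $(z_k,\varphi(z_k))$ with vertical component bounded away from zero. An alternative is to note that $z_k$ is a minimizer of $\varphi+\delta_k\nnorm{\cdot-z_k}$ locally, hence $(0,-1)$ lies within $\delta_k$ of $\regnormal$ after applying the exact formula $\regsub(\varphi+\delta\nnorm{\cdot-z})(z)$ via the fuzzy rule. Keeping all perturbations $\oo(\tau_k)$ is what preserves the direction. Assertion~\ref{item:dual_NOC_limiting_nondir} then follows: $0\in\sub\varphi(\bar x)$ is the classical Fermat rule, since $0\in\regsub\varphi(\bar x)\subset\sub\varphi(\bar x)$. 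The directional part comes from assertion~\ref{item:dual_NOC_limiting_dir} together with \cref{lem:relation}.
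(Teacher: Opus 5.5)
Your argument is correct and has the same skeleton as the paper's proof. You localize with $\psi=\varphi+\iota_{\bar x+\B_{\varepsilon,\rho}(u)}$ and start from $x_k=\bar x+\tau_kw_k$ realizing $\d\varphi(\bar x)(u)=0$. You then apply a variational principle with radius $\oo(\tau_k)$ and slope tending to $0$. Finally, you check that the new points still approach $\bar x$ from direction $u$, lie where the indicator is inactive, and have function values $\varphi(\bar x)+\oo(\tau_k)$.

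The genuine difference is the tool in the middle. The paper invokes the subdifferential variational principle \cite[Theorem~2.28]{Mordukhovich2006}. It directly delivers $y_k$ with $\nnorm{y_k-x_k}\le t_kr_k$ and an exact regular subgradient $\xi_k\in\regsub\widetilde\varphi(y_k)$ with $\nnorm{\xi_k}\le r_k$. So the new point lies on the graph, and $(\xi_k,-1)$ is immediately a regular normal to $\epi\varphi$ there, with no $\alpha_k,\beta_k$ bookkeeping. You instead use plain Ekeland and then a fuzzy Fermat/sum rule in $\R^n\times\R$ for the Lipschitz function $(x,\alpha)\mapsto\alpha+\delta_k\nnorm{x-z_k}$ over $\epi\varphi$. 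That is essentially how such a principle is proved, so your route is longer but more self-contained. Your observation that the fuzzy step cannot be skipped is right: $\regsub\varphi(z_k)$ itself may be empty, as for a $-|x|$-type kink. Your ``alternative'' remark therefore only makes sense at nearby points, i.e., it is the same fuzzy rule and not a second route.

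One technical point needs fixing. $\epsilon_k=\varphi(x_k)-\varphi(\bar x)$ may vanish, and then $\lambda_k=\sqrt{\epsilon_k\tau_k}=0$ and $\delta_k=\epsilon_k/\lambda_k$ are not admissible in Ekeland's principle. The paper avoids this by working with the strictly positive error $\varphi(x_k)-\varphi(\bar x)+t_k^2$. You can do the same: replace $\epsilon_k$ by $\epsilon_k+\tau_k^2$, which keeps $\lambda_k/\tau_k\to0$ and $\epsilon_k/\lambda_k\to0$. Alternatively, note that when $\epsilon_k=0$ the point $x_k$ is already a minimizer of $\psi$ lying in the interior of the directional neighborhood, so $0\in\regsub\varphi(x_k)$ directly.
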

\begin{proof}
	Let us start to prove assertion~\ref{item:dual_NOC_limiting_dir}.
	Therefore, we pick constants $\varepsilon,\rho>0$ such that \eqref{eq:dir_loc_min} holds.
	Consider the function $\widetilde \varphi:= \varphi + \iota_{\xb + \B_{\varepsilon,\rho}(u)}$
	which is proper, lsc, and bounded from below by $\varphi(\bar x)$.
	Furthermore, $\widetilde\varphi$ possesses the global minimizer $\bar x$, and
	\begin{equation}\label{eq:same_subdif}
		\forall x\in\xb + \B_{\varepsilon/2,\rho/2}(u)\colon\quad
		x\neq\bar x
		\quad\Longrightarrow\quad
		\regsub \widetilde \varphi(x) = \regsub \varphi(x)
	\end{equation} 
	holds trivially.
	From \cref{prop:primal_fo_NOC} and $u\in\mathcal C_\varphi(\bar x)$
	we know $\d\varphi(\bar x)(u)=0$.
	Hence, there exist sequences $t_k \downarrow 0$ and $u_k \rightarrow u$ with
	\begin{equation}\label{eq:subderivative_to_zero}
		\frac{\varphi(\xb + t_k u_k) - \varphi(\xb)}{t_k} \rightarrow 0,
	\end{equation}
	which implies $\varphi(\xb + t_k u_k) \rightarrow \varphi(\xb)$.
	Let us set $x_k:=\bar x + t_ku_k$ for each $k\in\N$.
	Then we have $\varphi(x_k)\to\varphi(\bar x)$ and $(x_k-\bar x)/t_k\to u$.
	By passing to a subsequence (without relabeling) if necessary,
	we may assume that $x_k \in \xb + \B_{\varepsilon,\rho}(u)$ 
	is valid for all $k \in \N$.
	Taking \eqref{eq:dir_loc_min} and \eqref{eq:subderivative_to_zero}
	into account, we find a sequence $r_k \downarrow 0$ such that
	\begin{equation}\label{eq:consequence_subderivative_to_zero}
		\forall k \in \N\colon\quad 
		0 < \varphi(x_k) - \varphi(\xb) + t_k^2 \leq t_kr_k^2.
	\end{equation} 
	For each $k \in \N$, $x_k$ is a $\varphi(x_k) - \varphi(\xb) + t_k^2$-minimizer 
	of $\widetilde \varphi$ due to
	\begin{align*}
		\widetilde \varphi(x_k) 
		= 
		\varphi(x_k)
		<
		\varphi(\xb) + \varphi(x_k) - \varphi(\xb) + t_k^2=
		\widetilde \varphi(\xb) + \varphi(x_k) - \varphi(\xb) + t_k^2.
	\end{align*}
	Applying \cite[Theorem~2.28]{Mordukhovich2006},
	which is a corollary of Ekeland's variational principle, yields,
	for each $k\in\N$, the existence of
	$y_k,\xi_k \in \R^n$  
	with
	\begin{subequations}\label{eq:ekeland}
	\begin{align}
		\label{eq:ekeland_i}
			\nnorm{y_k - x_k} 
			&\leq 
			t_k r_k,
			\\
		\label{eq:ekeland_ii}
			\widetilde \varphi (y_k) 
			&\leq 
			\widetilde \varphi(\xb) + \varphi(x_k) - \varphi(\xb) + t_k^2,
			\\
		\label{eq:ekeland_iii}
			\xi_k 
			&\in 
			\regsub \widetilde \varphi(y_k),
			\\
		\label{eq:ekeland_iv}
			\norm{\xi_k} 
			&\leq 
			\frac{\varphi(x_k) - \varphi(\xb) + t_k^2}{r_k t_k}.
	\end{align}
	\end{subequations}
	Employing \eqref{eq:ekeland_i}, we obtain the estimates
	\begin{equation*}
	\begin{aligned}
	\frac{\nnorm{y_k - \xb}}{t_k} 
	&\leq
	\frac{\nnorm{x_k - \xb}}{t_k} + \frac{\nnorm{y_k - x_k}}{t_k}
	\leq
	\frac{\nnorm{x_k - \xb}}{t_k} + r_k
	\rightarrow 
	\norm{u}
	=
	1,
	\\
	\frac{\nnorm{y_k - \xb}}{t_k} 
	&\geq
	\frac{\nnorm{x_k - \xb}}{t_k} - \frac{\nnorm{y_k - x_k}}{t_k}
	\geq
	\frac{\nnorm{x_k - \xb}}{t_k} - r_k
	\rightarrow 
	\norm{u}
	=
	1, 
	\end{aligned}
	\end{equation*}
	which yield $\nnorm{y_k -\xb}/t_k \rightarrow 1$ and,
	particularly, $y_k \rightarrow \xb$.	
	
	For each $k\in\N$, we set
	$t_k' := \nnorm{y_k - \xb}$ and 
	$u_k' := (y_k - \bar x)/t_k'$.
	Then we have $t'_k\downarrow 0$, $t'_k/t_k\to 1$,
	and we observe from \eqref{eq:ekeland_i} and $r_k\downarrow 0$ that
	\begin{align*}
		u'_k
		=
		\frac{y_k - \xb}{\nnorm{y_k - \xb}}
		&=
		\frac{x_k - \xb}{\nnorm{y_k - \xb}} 
		+
		\frac{y_k - x_k}{\nnorm{y_k - \xb}}
		=
		\frac{x_k - \xb}{t_k} \frac{t_k}{t_k'} 
		+
		\frac{y_k - x_k}{t_k} \frac{t_k}{t_k'}
		\to 
		u
	\end{align*}
	is valid.
	By construction, we have $y_k=\bar x + t'_ku'_k$ for each $k\in\N$.
	Additionally, due to $t'_k\downarrow 0$ and $u'_k\to u$,
	we may assume, by passing to a subsequence (without relabeling) if necessary,
	that $y_k\in\xb + \mathbb B_{\varepsilon/2,\rho/2}(u)$ holds for all $k\in\N$.
	Furthermore, $u\in\mathbb S$ guarantees $y_k\neq\bar x$ for all $k\in\N$.
	
	Exploiting \eqref{eq:dir_loc_min} and \eqref{eq:ekeland_ii}, we find
	\begin{equation*}
		\varphi(\xb) 
		\leq 
		\varphi(y_k) 
		= 
		\widetilde \varphi(y_k) 
		\leq 
		\varphi (x_k) + t_k^2 
		\to 
		\varphi(\xb),
	\end{equation*}
	which gives the convergences $\varphi(y_k)\to\varphi(\bar x)$ and, via \eqref{eq:subderivative_to_zero},
	\begin{align*}
		\left|\frac{\varphi(\bar x + t_k'u_k')-\varphi(\bar x)}{t_k'}\right|
		&\leq
		\left|\frac{\varphi(x_k) - \varphi(\bar x) + t_k^2}{t_k'}\right|
		\leq
		\left|\frac{\varphi(\bar x + t_ku_k)-\varphi(\bar x)}{t_k}\right|\frac{t_k}{t_k'} + t_k\frac{t_k}{t_k'}
		\to 0.
	\end{align*}
	For each $k\in\N$, let us set
	\[
		\nu'_k:=\frac{\varphi(\bar x + t_k'u_k')-\varphi(\bar x)}{t_k'},
	\]
	and observe from above that $\nu'_k \to 0 = \d\varphi(\bar x)(u)$ is valid.
	Using \eqref{eq:same_subdif} and \eqref{eq:ekeland_iii}, we find
	\begin{align*}
		\xi_k
		\in
		\regsub\varphi(y_k)
		&=
		\{x^*\in\R^n\,|\,(x^*,-1)\in\regnormal_{\epi\varphi}((y_k,\varphi(y_k)))\}
		\\
		&=
		\{x^*\in\R^n\,|\,(x^*,-1)\in\regnormal_{\epi\varphi}((\bar x,\varphi(\bar x))+t_k'(u_k',\nu_k'))\}
	\end{align*}
	for each $k\in\N$,
	and \eqref{eq:consequence_subderivative_to_zero} and \eqref{eq:ekeland_iv}
	show $\xi_k\to 0$.
	Hence, exploiting $(u_k',\nu_k')\to(u,\d\varphi(\bar x)(u))$, 
	we find $0 \in \subgeo \varphi(\xb; (u,\d\varphi(\bar x)(u)))$
	by definition of the geometric directional limiting subdifferential.
	
	Assertion~\ref{item:dual_NOC_limiting_nondir} is a trivial consequence of
	\cite[Theorem~10.1]{RockafellarWets1998}, \cref{lem:relation}, and 
	the already proven statement~\ref{item:dual_NOC_limiting_dir}.
\end{proof}

As a simple corollary of 
\cref{lem:fst_subder_in_graphical_der}, \cref{thm:dual_NOC_limiting}, and \eqref{eq:relation_lim_normals_and_subgradients},
we obtain the subsequently stated necessary optimality conditions 
in terms of the directional limiting subdifferential.

\begin{corollary}\label{cor:dual_NOC_limiting}
	Let $\varphi\colon\R^n\to\barR$ be an lsc function,
	and fix $\bar x\in\R^n$ such that $|\varphi(\bar x)|<\infty$.
	Then the following assertions hold.
	\begin{enumerate}
		\item\label{item:dual_NOC_limiting_dir_cor} 
			Pick $u\in\mathcal C_\varphi(\bar x)\cap\mathbb S$.
			If $\bar x$ is a local minimizer of $\varphi$ in direction $u$,
			then $0\in\sub\varphi(\bar x;u)$ is valid.
		\item\label{item:dual_NOC_limiting_nondir_cor} 
			If $\bar x$ is a local minimizer of $\varphi$,
			then $0\in\sub\varphi(\bar x)$,
			and $0\in\sub\varphi(\bar x;u)$ is valid for each $u\in\mathcal C_\varphi(\bar x)\cap\mathbb S$.
	\end{enumerate}
\end{corollary}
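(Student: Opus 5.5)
The plan is to derive \cref{cor:dual_NOC_limiting} directly from \cref{thm:dual_NOC_limiting} by passing from the geometric to the analytic directional limiting subdifferential via \eqref{eq:relation_lim_normals_and_subgradients}. To do so, we need the value $\nu:=\d\varphi(\bar x)(u)$ to be an admissible index in the union on the left-hand side, i.e., $\nu\in D\varphi(\bar x)(u)$. This is exactly what \cref{lem:fst_subder_in_graphical_der} delivers, provided $|\d\varphi(\bar x)(u)|<\infty$.

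For assertion~\ref{item:dual_NOC_limiting_dir_cor}, fix $u\in\mathcal C_\varphi(\bar x)\cap\mathbb S$ and let $\bar x$ be a local minimizer of $\varphi$ in direction $u$. By \cref{prop:primal_fo_NOC}\,\ref{item:primal_fo_NOC_dir} we have $\d\varphi(\bar x)(u)\geq 0$. Since $u\in\mathcal C_\varphi(\bar x)$, we also have $\d\varphi(\bar x)(u)\leq 0$, so $\d\varphi(\bar x)(u)=0$, which in particular is finite.

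\cref{lem:fst_subder_in_graphical_der} then gives $0=\d\varphi(\bar x)(u)\in D\varphi(\bar x)(u)$. \cref{thm:dual_NOC_limiting}\,\ref{item:dual_NOC_limiting_dir} yields $0\in\subgeo\varphi(\bar x;(u,0))$. This set is one member of the union in \eqref{eq:relation_lim_normals_and_subgradients} (taking $\nu=0$), so $0\in\sub\varphi(\bar x;u)$.

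For assertion~\ref{item:dual_NOC_limiting_nondir_cor}, the inclusion $0\in\sub\varphi(\bar x)$ is already contained in \cref{thm:dual_NOC_limiting}\,\ref{item:dual_NOC_limiting_nondir}. For each $u\in\mathcal C_\varphi(\bar x)\cap\mathbb S$, \cref{lem:relation} shows that $\bar x$ is a local minimizer of $\varphi$ in direction $u$, so part~\ref{item:dual_NOC_limiting_dir_cor} applies.

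The only point requiring care is the finiteness hypothesis of \cref{lem:fst_subder_in_graphical_der}. This is settled by combining the first-order necessary condition with criticality, as above, so I do not expect any genuine obstacle.
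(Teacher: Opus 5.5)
Your proposal is correct and follows the same route as the paper, which treats the corollary as an immediate consequence of \cref{lem:fst_subder_in_graphical_der}, \cref{thm:dual_NOC_limiting}, and \eqref{eq:relation_lim_normals_and_subgradients}. You also check explicitly that $\d\varphi(\bar x)(u)=0$ is finite, using \cref{prop:primal_fo_NOC} together with criticality, and you handle part~\ref{item:dual_NOC_limiting_nondir_cor} via \cref{lem:relation}.
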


We note that, via the elementary sum rule for the directional limiting subdifferential,
see e.g.\ \cite[Theorem~5.6]{LongWangYang2017},
\cref{cor:dual_NOC_limiting}\,\ref{item:dual_NOC_limiting_dir_cor}
generalizes \cite[Proposition~3.2]{OuyangYeZhang2025}.
Whenever $\varphi$ is locally Lipschitz continuous at $\bar x$,
the assertion from \cref{cor:dual_NOC_limiting}\,\ref{item:dual_NOC_limiting_nondir_cor} 
can be distilled from 
\cite[Corollary~4.1]{BenkoMehlitz2024}
and
\cite[Theorem 7(ii)]{Gfrerer2013}.
Dual Fermat-type optimality conditions based on a weaker (in the sense of set inclusion) notion of the directional limiting subdifferential,
which did not prevail the more convenient tools from \cite{BenkoGfrererOutrata2019,Gfrerer2013}, 
can be found in \cite[Section~5]{GinchevMordukhovich2012}.

The following example illustrates that the conditions from \cref{thm:dual_NOC_limiting}\,\ref{item:dual_NOC_limiting_dir}
can, indeed, be sharper than those ones from \cref{cor:dual_NOC_limiting}\,\ref{item:dual_NOC_limiting_dir_cor}.
\begin{example}\label{ex:geometric_better}
	Consider the lsc function $\varphi\colon\R\to\R$ given by
	\[
		\forall x\in\R\colon\quad
		\varphi(x)
		:=
		\begin{cases}
			0				
				& \exists k\in\N\colon\,x\in(-(\frac12)^{3k-3},-3(\frac12)^{3k-1}),
			\\
			\frac{1}{4}x	 - 3(\frac12)^{3k+1}	
				& \exists k\in\N\colon\,x\in[-3(\frac12)^{3k-1},-(\frac12)^{3k-1}],
			\\
			x				
				& \text{otherwise,}
		\end{cases}
	\]
	as well as $\bar x:=0$ and the direction $u:=-1$.
	Observing that $\varphi(x)\geq x$ holds for all $x\in\R$,
	we find $\d\varphi(\bar x)(u)=-1$, i.e., $u\in\mathcal C_\varphi(\bar x)\cap\mathbb S$.
	Let us analyze the sequences given by
	$x_k := -(\frac12)^{3k-3}$, 
	$x'_k := -(\frac12)^{3k-1}$, and
	$x''_k := -3(\frac12)^{3k-1}$ for each $k \in \N$. 
	We emphasize that $\varphi$ is continuous at all points $x_k'$, $k\in\N$.
	Notice that, in $\gph\varphi$, $(x_k, \varphi(x_k))$ and $(x'_k, \varphi(x'_k))$ converge to $(\xb, 0)$ from direction $(u, \d\varphi(\xb)(u))$, 
	while $(x''_k, \varphi(x''_k))$ converges to $(\xb, 0)$ from direction $(u, -\frac12)$.
	Furthermore, the points $x_k$, $x'_k$, and $x''_k$, $k\in\N$,
	are all possible points where $\varphi$ is not differentiable, except for $\xb$ itself.
	For all $k \in \N$, we have
	\begin{align*}
		&\regnormal_{\epi\varphi}((x_k, \varphi(x_k)))
		=
		\{(r, -s)\in\R\times\R\,|\, s \geq 0,\ r \geq s\},\\
		&\regnormal_{\epi\varphi}((x'_k, \varphi(x'_k)))
		=
		\{(r, -s)\in\R\times\R\,|\, s\geq 0, \tfrac14 s \leq r \leq s\},\\
		&\regnormal_{\epi\varphi}((x''_k, \varphi(x''_k))) 
		= 
		\{(r,-s)\in\R\times\R\,|\, s \geq 0,\ r \leq \tfrac14 s\},
	\end{align*}
	which yields
	\begin{align*}
		\subgeo\varphi(\xb; (u, \d\varphi(\xb)(u))) = [\tfrac14, \infty),
		\qquad
		\subgeo\varphi(\xb; (u, -\tfrac12)) = (-\infty, \tfrac14],
	\end{align*} 
	and $\sub\varphi(\xb; u) = \R$ follows from \eqref{eq:relation_lim_normals_and_subgradients}.
	
	Hence, \cref{thm:dual_NOC_limiting}\,\ref{item:dual_NOC_limiting_dir} rules out 
	directional local minimality
	of $\bar x$ for $\varphi$ in direction $u$,
	while \cref{cor:dual_NOC_limiting}\,\ref{item:dual_NOC_limiting_dir_cor} does not.
	Of course, one can also use \cref{prop:primal_fo_NOC}\,\ref{item:primal_fo_NOC_dir} 
	to rule out $\bar x$
	as a local minimizer of $\varphi$ in direction $u$ due to $\d\varphi(\bar x)(u)<0$.
\end{example}

Given an lsc function $\varphi\colon\R^n\to\barR$, $\bar x\in\R^n$ such that $|\varphi(\bar x)|<\infty$,
and $u\in\mathcal C_\varphi(\bar x)\cap\mathbb S$ such that $\bar x$ is a local minimizer of $\varphi$ in direction $u$,
one may ask the question whether $0\in\subgeo\varphi(\bar x;(u,\nu))$ provides a necessary optimality condition
for $\nu\in D\varphi(\bar x)(u)\setminus\{\d\varphi(\bar x)(u)\}$, 
see \cref{lem:fst_subder_in_graphical_der} and \eqref{eq:relation_lim_normals_and_subgradients} as well.
As our next example shows, this is not the case in general.

\begin{example}\label{ex:geometric_NOC_via_general_nu}
	Consider the lsc function $\varphi\colon\R\to\R$ given by
	\[
		\forall x\in\R\colon\quad
		\varphi(x)
		:=
		\begin{cases}
			0	&	\text{if }x\in\{-1/k\,|\,k\in\N\}\cup[0,\infty),
			\\
			-x	&	\text{otherwise,}
		\end{cases}
	\]
	as well as $\bar x := 0$ and the direction $u:=-1$. Then we find $\d\varphi(\bar x)(u)=0$ and, thus, $-1\in\mathcal C_\varphi(\bar x)\cap\mathbb S$.
	Additionally, $\bar x$ is a local minimizer of $\varphi$ in direction $u$.
	Note that $D\varphi(\bar x)(u)=\{0,1\}$.
	We find $\subgeo\varphi(\bar x;(u,\d\varphi(\bar x)(u)))=\R$ and, thus, $0\in\subgeo\varphi(\bar x;(u,\d\varphi(\bar x)(u)))$  
	as announced in \cref{thm:dual_NOC_limiting}\,\ref{item:dual_NOC_limiting_dir},
	but for $1\in D\varphi(\bar x)(u)$,
	we obtain $\subgeo\varphi(\bar x;(u,1))=\{-1\}$, i.e., $0\notin \subgeo\varphi(\bar x;(u,1))$.
\end{example}

The following example shows that the refined directional necessary optimality condition
from \cref{cor:dual_NOC_limiting}\,\ref{item:dual_NOC_limiting_nondir_cor} may rule out
points which satisfy the nondirectional necessary optimality condition while not being local minimizers.

\begin{example}\label{ex:dual_NOC_limiting}
	Consider the lsc function $\varphi\colon\R\to\barR$ given by
	\[
		\forall x\in\R\colon\quad
		\varphi(x)
		:=
		\begin{cases}
			x		&	x\in(-\infty,0]\cup(1,\infty),\\
			\frac1k	&	k\in\N\colon\,x\in(1/(k+1),1/k],
		\end{cases}
	\]
	as well as $\bar x:=0$ which is not a local minimizer of $\varphi$. 
	Then we have $0\in\sub\varphi(\bar x)$
	by considering the sequence $x_k\to\bar x$ given by
	\[
		\forall k\in\N\colon\quad
		x_k:=\frac{1}{k+1} + \frac12\left(\frac{1}{k}-\frac{1}{k+1}\right),
	\]
	and observing that $\varphi(x_k)=\frac1k$ as well as $\regsub\varphi(x_k)=\{0\}$
	hold for each $k\in\N$.
	Hence, the classical nondirectional necessary condition from \cref{cor:dual_NOC_limiting}\,\ref{item:dual_NOC_limiting_nondir_cor}
	does not rule out $\bar x$ as a potential local minimizer of $\varphi$.
	However, as we have $\mathcal C_\varphi(\bar x)\cap\mathbb S=\{-1\}$,
	due to $\varphi(x)\geq x$ for all $x\in\R$,
	and $\partial\varphi(\bar x;-1)=\{1\}$,
	the directional necessary optimality condition from \cref{cor:dual_NOC_limiting}\,\ref{item:dual_NOC_limiting_nondir_cor} shows that $\bar x$
	cannot be a local minimizer of $\varphi$.
\end{example}

Let us, in the general situation, compare the nondirectional and directional necessary optimality conditions
stated in \cref{cor:dual_NOC_limiting}\,\ref{item:dual_NOC_limiting_nondir_cor}.

\begin{proposition}\label{prop:unconstrained_necessary_lim_relation}
	Let $\varphi\colon\R^n\to\barR$ be an lsc function,
	and fix $\bar x\in\R^n$ such that $|\varphi(\bar x)|<\infty$.
	We consider the following conditions.
	\begin{enumerate}
		\item\label{item:NOC_lim_undir}
			We have $0\in\sub\varphi(\bar x)$.
		\item\label{item:NOC_lim_dir}
			For each $u\in\mathcal C_\varphi(\bar x)\cap\mathbb S$, we have $0\in\sub\varphi(\bar x;u)$.
	\end{enumerate}
	Then we always have the implication [\ref{item:NOC_lim_dir}$\Longrightarrow$\ref{item:NOC_lim_undir}],
	and the converse implication [\ref{item:NOC_lim_undir}$\Longrightarrow$\ref{item:NOC_lim_dir}]
	is also true provided $\varphi$ is convex.
\end{proposition}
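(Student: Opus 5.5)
The implication [\ref{item:NOC_lim_dir}$\Longrightarrow$\ref{item:NOC_lim_undir}] will be split into two cases, depending on whether $\mathcal C_\varphi(\bar x)\cap\mathbb S$ is empty. For the converse under convexity, the plan is to pass through global minimality and then apply \cref{cor:dual_NOC_limiting}.

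\emph{Case $\mathcal C_\varphi(\bar x)\cap\mathbb S\neq\emptyset$.} Pick some $u$ in this set. Condition \ref{item:NOC_lim_dir} yields $0\in\sub\varphi(\bar x;u)$. By construction we have $\sub\varphi(\bar x;u)\subset\sub\varphi(\bar x)$: the defining sequences $x_k:=\bar x+t_ku_k\to\bar x$ satisfy $\varphi(x_k)\to\varphi(\bar x)$ and carry regular subgradients $x_k^*\to x^*$. Hence $0\in\sub\varphi(\bar x)$.

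\emph{Case $\mathcal C_\varphi(\bar x)\cap\mathbb S=\emptyset$.} Here condition \ref{item:NOC_lim_dir} is vacuous, so a different argument is needed, and this is the only step requiring care. By \cref{cor:primal_fo_SOC}, the first-order growth condition holds for $\varphi$ at $\bar x$. In particular, $\bar x$ is a (strict) local minimizer of $\varphi$. The generalized Fermat rule \cite[Theorem~10.1]{RockafellarWets1998} then gives $0\in\regsub\varphi(\bar x)\subset\sub\varphi(\bar x)$. Alternatively, one can directly invoke \cref{cor:dual_NOC_limiting}\,\ref{item:dual_NOC_limiting_nondir_cor}, whose first part states exactly this.

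\emph{Converse under convexity.} Assume $\varphi$ is convex and $0\in\sub\varphi(\bar x)$. Since all subdifferentials coincide with the convex-analysis subdifferential in this situation \cite[Proposition~8.12]{RockafellarWets1998}, the subgradient inequality with $x^*=0$ reads $\varphi(x)\geq\varphi(\bar x)$ for all $x\in\R^n$. Thus $\bar x$ is a global, hence local, minimizer of $\varphi$. By \cref{lem:relation}, $\bar x$ is then a local minimizer of $\varphi$ in every direction $u\in\mathbb S$. Applying \cref{cor:dual_NOC_limiting}\,\ref{item:dual_NOC_limiting_nondir_cor} (or part \ref{item:dual_NOC_limiting_dir_cor} direction-wise) yields $0\in\sub\varphi(\bar x;u)$ for each $u\in\mathcal C_\varphi(\bar x)\cap\mathbb S$, which is \ref{item:NOC_lim_dir}.

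No real obstacle is expected. The only subtlety is not overlooking the vacuous case in the first implication, which is handled through the first-order sufficient condition of \cref{cor:primal_fo_SOC}.
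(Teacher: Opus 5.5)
Your proposal is correct and follows the paper's proof essentially step by step. For the first implication you split on whether $\mathcal C_\varphi(\bar x)\cap\mathbb S$ is empty: in the empty case you use \cref{cor:primal_fo_SOC} and then Fermat's rule (or \cref{cor:dual_NOC_limiting}), and otherwise you use $\sub\varphi(\bar x;u)\subset\sub\varphi(\bar x)$. For convex $\varphi$ you pass through global minimality of $\bar x$ and then apply \cref{cor:dual_NOC_limiting}\,\ref{item:dual_NOC_limiting_nondir_cor}, exactly as the paper does.
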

\begin{proof}
	For the proof of the generally valid implication
	[\ref{item:NOC_lim_dir}$\Longrightarrow$\ref{item:NOC_lim_undir}],
	we consider two cases.
	If, on the one hand, 
	$\mathcal C_\varphi(\bar x)\cap\mathbb S$ is empty, then \cref{cor:primal_fo_SOC}
	implies that $\bar x$ is a local minimizer of $\varphi$, 
	and $0\in\sub\varphi(\bar x)$ follows from 
	\cref{cor:dual_NOC_limiting}\,\ref{item:dual_NOC_limiting_nondir_cor}.
	If, on the other hand, there is some $u\in\mathcal C_\varphi(\bar x)\cap\mathbb S$,
	then $0\in\sub\varphi(\bar x;u)$ yields $0\in\sub\varphi(\bar x)$
	as we have $\sub\varphi(\bar x;u)\subset\sub\varphi(\bar x)$ by construction.
	
	 Next, we assume that $\varphi$ is convex, 
	 and prove validity of the implication [\ref{item:NOC_lim_undir}$\Longrightarrow$\ref{item:NOC_lim_dir}].
	 Hence, assume that $0\in\sub\varphi(\bar x)$.
	 By convexity of $\varphi$, this already implies that $\bar x$ is a minimizer of $\varphi$,
	 so that \cref{cor:dual_NOC_limiting}\,\ref{item:dual_NOC_limiting_nondir_cor} shows
	 $0\in\sub\varphi(\bar x;u)$ for all $u\in\mathcal C_\varphi(\bar x)\cap\mathbb S$.
\end{proof}

\cref{ex:dual_NOC_limiting} illustrates that the implication
[\ref{item:NOC_lim_undir}$\Longrightarrow$\ref{item:NOC_lim_dir}]
in \cref{prop:unconstrained_necessary_lim_relation} does, in general,
not hold if the function under consideration is not convex.
Furthermore, we emphasize that a result similar to \cref{prop:unconstrained_necessary_lim_relation}
can be stated and proven based on \cref{thm:dual_NOC_limiting}\,\ref{item:dual_NOC_limiting_nondir}
which exploits the geometric directional limiting subdifferential.

\subsection{Conditions in terms of proximal subgradients}\label{sec:dual_conditions_prox}

Let us start this subsection by stating the definition of the directional proximal normal cone
which dates back to \cite[Definition~2.8]{BenkoGfrererYeZhangZhou2023}.
For a closed set $\Omega\subset\R^n$, some point $\bar x\in\Omega$, 
and a direction $u\in T_\Omega(\bar x)$, we refer to
\[
	\proxprenormal_\Omega(\bar x;u)
	:=
	\{x^*\in\R^n\,|\,
		\exists\varepsilon,\rho,\gamma>0,\,\forall x\in\Omega\cap(\bar x + \mathbb B_{\varepsilon,\rho}(u))\colon\,
		\ninnerprod{x^*}{x-\bar x}\leq\gamma\nnorm{x-\bar x}^2
	\}
\]
as the directional proximal pre-normal cone to $\Omega$ in direction $u$ at $\bar x$.
It is clear from the definition that the inclusions
\begin{equation}\label{eq:estimates_proximal_prenormals}
	\{x^*\in\R^n\,|\,\innerprod{x^*}{u}<0\}
	\subset
	\proxprenormal_\Omega(\bar x;u)
	\subset
	\{x^*\in\R^n\,|\,\innerprod{x^*}{u}\leq 0\}
\end{equation}
hold,
which motivates the definition of
\[
	\proxnormal_\Omega(\bar x;u)
	:=
	\proxprenormal_\Omega(\bar x;u)\cap\{u\}^\perp,
\]
the so-called directional proximal normal cone to $\Omega$ in direction $u$ at $\bar x$.
In the case where $u'\notin T_\Omega(\bar x)$,
we set $\proxprenormal_\Omega(\bar x;u'):=\proxnormal_\Omega(\bar x;u'):=\emptyset$
for the purpose of completeness.
Similarly, $\proxprenormal_\Omega(\tilde x;\tilde u):=\proxnormal_\Omega(\tilde x;\tilde u):=\emptyset$
may holds for $\tilde x\notin\Omega$ and arbitrary $\tilde u\in\R^n$.

Observe that $\proxprenormal_\Omega(\bar x;0)=\proxnormal_\Omega(\bar x;0)=\proxnormal_\Omega(\bar x)$ is clearly valid by definition.
In \cite[Proposition 2.9]{BenkoGfrererYeZhangZhou2023}, it was shown that the inclusions
\begin{equation}\label{eq:normal_cone_inclusions}
	\proxnormal_\Omega(\xb) \cap \{u\}^\bot
	\subset
	\proxnormal_\Omega(\xb; u)
	\subset
	\regnormal_{T_\Omega(\xb)}(u)
	\subset
	N_{T_\Omega(\xb)}(u)
	\subset
	N_\Omega(\xb; u)
\end{equation}
hold for $u\in T_\Omega(\bar x)$, 
and that $\proxprenormal_\Omega(\xb; u)$ as well as $\proxnormal_\Omega(\xb; u)$ are convex. 

In \cite[Definition~3.7]{BenkoMehlitz2023}, the authors introduced a directional version
of the proximal subdifferential. Here, we recover its definition and comment on some
of its elementary properties.
Let $\varphi\colon \R^n\to\barR$ be an lsc function, 
fix $\xb \in \R^n$ such that $|\varphi(\xb)| < \infty$,
and pick
$u \in \R^n$ such that $|\d \varphi(\xb)(u)| < \infty$.
We refer to
\begin{equation*}
	\proxpresub \varphi(\xb; u) 
	:=
	\left\{
		x^* \in \R^n \, \middle|\,
		\begin{aligned} 
		&\exists \varepsilon,\rho,\gamma > 0,
		\forall x \in \xb + \B_{\varepsilon,\rho}(u)\colon\\
		&\quad\varphi(x) 
		\geq 
		\varphi(\xb) + \ninnerprod{x^*}{x - \xb} - \gamma \norm{x - \xb}^2
		\end{aligned}					
	\right\}		
	\end{equation*}
as the directional proximal pre-subdifferential of $\varphi$ in direction $u$ at $\bar x$.
Note that $\proxpresub \varphi(\xb; 0) = \proxsub \varphi(\xb)$.
Furthermore,
\begin{equation*}
	\proxsub\varphi(\bar x;u)
	:=
	\proxpresub\varphi(\bar x;u)
	\cap
	\{x^*\in\R^n\,|\,\d\varphi(\bar x)(u) = \innerprod{x^*}{u}\}
\end{equation*}
is referred to as the directional proximal subdifferential of $\varphi$ in direction $u$ at $\bar x$.

From the above definition we immediately find the inclusions
\begin{equation}\label{eq:inclusions_proxpresub}
	\{x^*\in\R^n\,|\,\d\varphi(\bar x)(u)>\innerprod{x^*}{u}\}
	\subset
	\proxpresub\varphi(\bar x;u)
	\subset
	\{x^*\in\R^n\,|\,\d\varphi(\bar x)(u)\geq\innerprod{x^*}{u}\}.
\end{equation}
Consequently, if $x^*\in\proxpresub\varphi(\bar x;u)$ satisfies
$\d\varphi(\bar x)(u)>\innerprod{x^*}{u}$,
it does not provide much information.
Hence, the definition of the directional proximal subdifferential is somewhat natural.
It has been shown in \cite[Lemma~3.8]{BenkoMehlitz2023} that
the directional proximal pre-subdifferential enjoys the equivalent representation
\begin{equation}\label{eq:alternative_rep_subproxpre}
	\proxpresub\varphi(\xb; u)
	=
	\{x^* \in \R^n \,|\, \d^2\varphi(\xb; x^*)(u) > -\infty\}
	.
\end{equation}

Some further comments on the directional proximal (pre-)subdifferential are collected in the following remark.

\begin{remark}\label{rem:dir_proxsub}
	Let $\varphi\colon \R^n\to\barR$ be an lsc function, 
	fix $\xb \in \R^n$ such that $|\varphi(\xb)| < \infty$,
	and pick
	$u \in \R^n$ such that $|\d \varphi(\xb)(u)| < \infty$.
	Then the following assertions hold.
	\begin{enumerate}
		\item\label{item:estimates_dir_proxsub} 
		We have the inclusions
		\begin{align*}
			\proxsub\varphi(\bar x)
			&\subset
			\proxpresub\varphi(\bar x;u),
			\\
			\proxsub\varphi(\bar x)\cap\{x^*\in\R^n\,|\,\d\varphi(\bar x)(u) = \innerprod{x^*}{u}\}
			&\subset
			\proxsub\varphi(\bar x;u).
		\end{align*}
		\item\label{item:zero_in_proxpresub_and_proxsub} 
		According to \eqref{eq:inclusions_proxpresub}, whenever $u\in\mathcal C_\varphi(\bar x)$ holds,
			conditions $0\in\proxpresub\varphi(\bar x;u)$ and $0\in\proxsub\varphi(\bar x;u)$ are equivalent.
		\item\label{item:zero_dir_proxsub_and_undir} 
		If $\bar x$ is a local minimizer of $\varphi$, then $\proxsub\varphi(\bar x;0)=\proxsub\varphi(\bar x)$,
		see \cref{rem:primal_fo_NOC_full_zero,rem:critical_cone_zero}.
	\end{enumerate}
\end{remark}

Let $\varphi\colon \R^n\to\barR$ be an lsc function, 
fix $\xb \in \R^n$ such that $|\varphi(\xb)| < \infty$,
and pick
$u \in \R^n$ such that $|\d \varphi(\xb)(u)| < \infty$.
Similar to the geometric directional limiting subdifferential, 
we can define the geometric directional proximal (pre-)subdifferential according to
\begin{align*}
	\proxpresubgeo\varphi(\xb; (u, \nu))
	&:=
	\{
	x^* \in \R^n
	\,|\,
	(x^*, -1) \in 
	\proxprenormal_{\epi\varphi}((\xb; \varphi(\xb)); (u, \nu))
	\},\\
	\proxsubgeo\varphi(\xb; (u, \nu))
	&:=
	\{
	x^* \in \R^n
	\,|\,
	(x^*, -1) \in
	\proxnormal_{\epi\varphi}((\xb; \varphi(\xb)); (u, \nu))
	\}
	,
\end{align*}
where $\nu\in\R$ is a given scalar.
Let us note that this definition is reasonable only if
$(u,\nu)\in T_{\epi\varphi}((\bar x,\varphi(\bar x)))=\epi\d\varphi(\bar x)$ holds,
i.e., $\nu\geq\d\varphi(\bar x)(u)$ must be valid.

Usage of \cite[Propositions~3.6(b), 3.10(b), and 3.13]{BenkoMehlitz2023} shows that, for each
$\nu \in D\varphi(\xb)(u)$, the inclusion
\begin{equation*}
	\proxpresub\varphi(\xb; u) \subset \proxpresubgeo\varphi(\xb; (u, \nu))
\end{equation*}
holds,
which in turn yields, via \cref{lem:fst_subder_in_graphical_der} and 
\cite[Corollary~3.14]{BenkoMehlitz2023},
\begin{equation}\label{eq:relation_pre_prox_subgradients}
	\proxpresub\varphi(\xb; u) = 
	\bigcap_{\nu \in D\varphi(\xb)(u)}
	\proxpresubgeo\varphi(\xb; (u, \nu))
	=
	\proxpresubgeo\varphi(\xb; (u, \d\varphi(\xb)(u)))
	.
\end{equation}
Furthermore, \cite[Corollary~3.14]{BenkoMehlitz2023} also reveals
\begin{equation}\label{eq:relation_prox_normals_and_subgradients}
		\proxsub\varphi(\bar x;u)
		=
		\proxsubgeo\varphi(\xb; (u,\d\varphi(\xb)(u))).
\end{equation}
This is clearly different from the much more complicated relationship
between the directional limiting subdifferential and its geometric counterpart.

Our next lemma shows that considering $\nu\in D\varphi(\bar x)(u)\setminus\{\d\varphi(\bar x)(u)\}$
in the definition of the geometric directional proximal subdifferential is not reasonable.
\begin{lemma}\label{lem:geometric_dir_prox_sub_trivial}
	Let $\varphi\colon \R^n \to \barR$ be an lsc function, 
	fix $\xb \in \R^n$ such that $|\varphi(\xb)| < \infty$, and 
	pick $u \in \R^n$ such that $|\d\varphi(\xb)(u)| < \infty$
	and $\nu\in(\d\varphi(\bar x)(u),\infty)$.
	Then we have $\proxsubgeo\varphi(\bar x;(u,\nu))=\emptyset$.
\end{lemma}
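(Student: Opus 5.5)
We argue by contradiction: suppose some $x^*\in\proxsubgeo\varphi(\bar x;(u,\nu))$ exists. By definition, $(x^*,-1)\in\proxnormal_{\epi\varphi}((\bar x,\varphi(\bar x));(u,\nu))$. Two consequences follow.

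First, $(u,\nu)\in T_{\epi\varphi}((\bar x,\varphi(\bar x)))$, since otherwise the cone is empty by convention. This is consistent with $\nu>\d\varphi(\bar x)(u)$, so it gives no contradiction by itself.

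Second, the defining annihilator condition yields $\innerprod{(x^*,-1)}{(u,\nu)}=0$, that is,
\[
	\innerprod{x^*}{u}=\nu.
\]
Moreover, $(x^*,-1)$ lies in the directional proximal pre-normal cone. We then exploit points of $\epi\varphi$ that approach $(\bar x,\varphi(\bar x))$ along the direction $(u,\d\varphi(\bar x)(u))$, whose second component is strictly smaller than $\nu$.

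The key step is to show that $(u,\nu')\in T_{\epi\varphi}((\bar x,\varphi(\bar x)))$ for $\nu':=\d\varphi(\bar x)(u)<\nu$, with points lying inside every directional neighborhood of $(u,\nu)$. The direction $(u,\nu')$ itself is not close to $(u,\nu)$, so instead we use the vertical half-line structure of the epigraph. Pick $t_k\downarrow0$ and $u_k\to u$ with
\[
	\frac{\varphi(\bar x+t_ku_k)-\varphi(\bar x)}{t_k}\to\nu'.
\]
Then $(\bar x+t_ku_k,\varphi(\bar x)+t_k\nu)\in\epi\varphi$ for large $k$, because $\varphi(\bar x+t_ku_k)\le\varphi(\bar x)+t_k\nu$ eventually, by $\nu'<\nu$. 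These points lie in the directional neighborhood of $(u,\nu)$, but they give only $\innerprod{x^*}{u}-\nu\le 0$, which we already know.

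To obtain strictness, we instead use the points
\[
	(\bar x+t_ku_k,\varphi(\bar x)+t_k\nu_k)\in\epi\varphi,\qquad \nu_k:=\nu-\delta_k,
\]
where $\delta_k:=\max\{\sqrt{t_k},\,\nu-\nu''_k\}$ and $\nu''_k$ denotes the difference quotient above. The sequence $\delta_k$ is chosen to tend to $0$ slowly: since $\nu''_k\to\nu'<\nu$, we have $\nu-\nu''_k\to\nu-\nu'>0$, so $\nu-\nu''_k$ alone does not tend to $0$. We therefore take $\delta_k:=\sqrt{t_k}$, which is admissible because $\varphi(\bar x+t_ku_k)\le\varphi(\bar x)+t_k(\nu-\sqrt{t_k})$ for large $k$, using $\nu''_k\to\nu'<\nu$. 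Since $(u_k,\nu-\sqrt{t_k})\to(u,\nu)$, the points eventually lie in $(\bar x,\varphi(\bar x))+\mathbb B_{\varepsilon,\rho}((u,\nu))$.

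The pre-normal inequality then gives
\[
	t_k\innerprod{x^*}{u_k}-t_k(\nu-\sqrt{t_k})\le\gamma t_k^2\bigl(\nnorm{u_k}^2+(\nu-\sqrt{t_k})^2\bigr).
\]
Dividing by $t_k$, we obtain
\[
	\innerprod{x^*}{u_k}-\nu+\sqrt{t_k}\le O(t_k).
\]

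The main obstacle is that the term $\innerprod{x^*}{u_k-u}$ may dominate $\sqrt{t_k}$, because the rate of $u_k\to u$ is unknown. To handle this, we first replace $u_k$ by $u$ itself. The fact that $x^*$ works along the sequence $u_k$ is not needed, because the vertical slack absorbs the error. Specifically, we use the points
\[
	\bigl(\bar x+t_ku_k,\ \varphi(\bar x)+t_k(\nu-s_k)\bigr),\qquad s_k:=\tfrac12(\nu-\nu')>0
\]
constant. These points do not approach direction $(u,\nu)$, so instead we take $s_k:=\max\{\sqrt{t_k},\,2|\innerprod{x^*}{u_k-u}|\}\to0$. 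This choice still keeps the points in $\epi\varphi$ for large $k$. It gives
\[
	\innerprod{x^*}{u}-\nu+s_k-|\innerprod{x^*}{u_k-u}|\le O(t_k),
\]
hence
\[
	\tfrac12 s_k\le O(t_k)\qquad\text{and}\qquad \sqrt{t_k}\le O(t_k),
\]
which is a contradiction since $t_k\downarrow0$. Thus $\proxsubgeo\varphi(\bar x;(u,\nu))=\emptyset$.
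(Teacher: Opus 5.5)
Your proof is correct, but it takes a different route from the paper. Strip out the abandoned attempts before writing it up: the two choices of $\delta_k$, the constant slack, and the remark about ``replacing $u_k$ by $u$'', which you never actually do---you keep $u_k$ and only estimate $\innerprod{x^*}{u_k-u}$. What remains is this. The annihilator condition gives $\innerprod{x^*}{u}=\nu$. You then test the pre-normal inequality at the epigraph points $(\bar x+t_ku_k,\varphi(\bar x)+t_k(\nu-s_k))$, built from a sequence realizing $\d\varphi(\bar x)(u)<\nu$. The adaptive slack $s_k:=\max\{\sqrt{t_k},2|\innerprod{x^*}{u_k-u}|\}$ does three jobs at once:
\begin{itemize}
\item it tends to zero, so the points eventually lie in $\epi\varphi$ and in the directional neighborhood of $(u,\nu)$;
\item it absorbs the uncontrolled horizontal error $\innerprod{x^*}{u_k-u}$;
\item it is much larger than $t_k$.
\end{itemize}
Together these give $\tfrac12\sqrt{t_k}\le Ct_k$, a contradiction. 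Every step checks out.

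The paper argues entirely at the level of cones instead. It combines:
\begin{itemize}
\item the known inclusion $\proxnormal_\Omega(\bar x;u)\subset\regnormal_{T_\Omega(\bar x)}(u)$ from \eqref{eq:normal_cone_inclusions};
\item the inclusion $\{u\}\times[\d\varphi(\bar x)(u),\infty)\subset T_{\epi\varphi}((\bar x,\varphi(\bar x)))$;
\item the fact that regular normal cones shrink when the set grows, together with the product rule.
\end{itemize}
Since $\nu$ is an interior point of $[\d\varphi(\bar x)(u),\infty)$, this yields $\proxnormal_{\epi\varphi}((\bar x,\varphi(\bar x));(u,\nu))\subset\R^n\times\{0\}$.

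The paper's route is shorter and gives the stronger structural fact that every directional proximal normal at $(u,\nu)$ is horizontal, but it rests on an imported result. Yours is self-contained from the definitions and in effect reproves, for this configuration, the mechanism behind that inclusion. It would also give horizontality if you shifted the test points upward instead of downward to exclude a positive last component.
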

\begin{proof}
	By definition of the first subderivative,
	we have $(u,\nu)\in\{u\}\times[\d\varphi(\bar x)(u),\infty)\subset \epi\d\varphi(\bar x)=T_{\epi\varphi}((\bar x,\varphi(\bar x)))$,
	so that \eqref{eq:normal_cone_inclusions}, the definition of the regular normal cone,
	the product rule from \cite[Proposition~1.2]{Mordukhovich2006},
	and $\d\varphi(\bar x)(u)<\nu<\infty$ yield
	\begin{align*}
		\proxnormal_{\epi\varphi}((\bar x,\varphi(\bar x));(u,\nu))
		&\subset
		\regnormal_{T_{\epi\varphi}((\bar x,\varphi(\bar x)))}((u,\nu))
		\\
		&\subset 
		\regnormal_{\{u\}\times[\d\varphi(\bar x)(u),\infty)}((u,\nu))
		\\
		&=
		\regnormal_{\{u\}}(u)\times\regnormal_{[\d\varphi(\bar x)(u),\infty)}(\nu)
		=
		\R^n\times\{0\}.
	\end{align*}
	Thus, $\proxsubgeo\varphi(\bar x;(u,\nu))=\emptyset$ is obtained.
\end{proof}

In the lemma below, we investigate the relation between the (geometric)
directional proximal and limiting subdifferential.

\begin{lemma}\label{lem:prox_in_dir_lim_subdiff}
	Let $\varphi\colon \R^n \to \barR$ be an lsc function, 
	fix $\xb \in \R^n$ such that $|\varphi(\xb)| < \infty$, and 
	pick $u \in \R^n$ such that $|\d\varphi(\xb)(u)| < \infty$. 
	Then, for each $\nu\in D\varphi(\bar x)(u)$, we have
	\[
		\proxsubgeo \varphi(\bar x;(u,\nu))
		\subset
		\subgeo \varphi(\bar x;(u,\nu)),
	\]
	and 
	\begin{equation*}
		\proxsub \varphi(\xb; u) \subset \sub \varphi(\xb; u)
	\end{equation*}
	is valid as well.
\end{lemma}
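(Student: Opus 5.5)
The first inclusion will be obtained at the level of normal cones. Fix $\nu\in D\varphi(\bar x)(u)$ and set $\Omega:=\epi\varphi$, $\bar z:=(\bar x,\varphi(\bar x))$, $w:=(u,\nu)$. We first check that $w\in T_\Omega(\bar z)$: since $\nu\in D\varphi(\bar x)(u)$, we have $(u,\nu)\in T_{\gph\varphi}(\bar z)\subset T_{\epi\varphi}(\bar z)$. The chain \eqref{eq:normal_cone_inclusions} from \cite[Proposition~2.9]{BenkoGfrererYeZhangZhou2023} then gives $\proxnormal_\Omega(\bar z;w)\subset N_\Omega(\bar z;w)$. Now take $x^*\in\proxsubgeo\varphi(\bar x;(u,\nu))$. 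By definition $(x^*,-1)\in\proxnormal_{\epi\varphi}(\bar z;(u,\nu))$, hence $(x^*,-1)\in\normal_{\epi\varphi}(\bar z;(u,\nu))$, which is exactly $x^*\in\subgeo\varphi(\bar x;(u,\nu))$. In the degenerate case $\nu\neq\d\varphi(\bar x)(u)$, \cref{lem:geometric_dir_prox_sub_trivial} even shows that the left-hand side is empty, so nothing needs to be checked there.

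For the second inclusion, we combine the first one with the known representations. By \eqref{eq:relation_prox_normals_and_subgradients} we have $\proxsub\varphi(\bar x;u)=\proxsubgeo\varphi(\bar x;(u,\d\varphi(\bar x)(u)))$. Since $|\d\varphi(\bar x)(u)|<\infty$, \cref{lem:fst_subder_in_graphical_der} gives $\d\varphi(\bar x)(u)\in D\varphi(\bar x)(u)$. The first part, applied with $\nu:=\d\varphi(\bar x)(u)$, therefore yields $\proxsub\varphi(\bar x;u)\subset\subgeo\varphi(\bar x;(u,\d\varphi(\bar x)(u)))$. Finally, \eqref{eq:relation_lim_normals_and_subgradients} shows that this set is contained in $\sub\varphi(\bar x;u)$.

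\textbf{Main obstacle.} The delicate point is whether \eqref{eq:normal_cone_inclusions}, i.e.\ \cite[Proposition~2.9]{BenkoGfrererYeZhangZhou2023}, applies with no restriction on the direction other than $w\in T_\Omega(\bar z)$. We have verified that membership above. If the cited result turns out to need more, we would argue directly: given $(x^*,-1)$ in the directional proximal normal cone with constants $\varepsilon,\rho,\gamma$, we would take points $\bar z+t_kw_k\in\Omega$ with $w_k\to w$ (which exist by tangency). We would then show that $(x^*,-1)$ is a regular normal to $\Omega$ at these points up to an error of order $\gamma t_k$, perturbing them by a $\gamma$-proximal correction if necessary, and pass to the limit. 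We expect this fallback to be unnecessary, since the chain is stated for all $u\in T_\Omega(\bar x)$.
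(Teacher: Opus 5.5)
Your proof is correct and follows essentially the same route as the paper. The first inclusion is read off from \eqref{eq:normal_cone_inclusions} with $\Omega:=\epi\varphi$; your explicit check that $(u,\nu)\in T_{\epi\varphi}((\bar x,\varphi(\bar x)))$ via $\gph\varphi\subset\epi\varphi$ is a detail the paper leaves implicit. The second inclusion follows by taking $\nu:=\d\varphi(\bar x)(u)$, admissible by \cref{lem:fst_subder_in_graphical_der}, and chaining \eqref{eq:relation_prox_normals_and_subgradients} with \eqref{eq:relation_lim_normals_and_subgradients}, exactly as in the paper; the fallback under ``Main obstacle'' is not needed, since the cited chain holds for every direction in the tangent cone.
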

\begin{proof}
	The first inclusion is a direct consequence of \eqref{eq:normal_cone_inclusions}
	and the definitions of the geometric directional proximal and limiting subdifferentials.
	For the proof of the second inclusion,
	we choose $\nu:=\d\varphi(\bar x)(u)$ in the first one,
	which is possible according to \cref{lem:fst_subder_in_graphical_der},
	and exploit \eqref{eq:relation_lim_normals_and_subgradients}
	as well as \eqref{eq:relation_prox_normals_and_subgradients}  
	to obtain
	\begin{align*}
		\proxsub \varphi(\xb; u) 
		=
		\proxsubgeo \varphi(\bar x;(u,\d\varphi(\bar x)(u)))
		\subset
		\subgeo\varphi(\bar x;(u,\d\varphi(\bar x)(u)))
		\subset
		\sub\varphi(\bar x;u).	
	\end{align*} 
	This completes the proof.
\end{proof}

The following result is now easily obtained from our preparations
and complements the directional necessary optimality conditions stated in \cref{cor:dual_NOC_limiting}.

\begin{theorem}\label{thm:unconstrained_necessary_prox}
	Let $\varphi\colon\R^n\to\barR$ be an lsc function,
	and fix $\bar x\in\R^n$ such that $|\varphi(\bar x)|<\infty$.
	Then the following assertions hold.
	\begin{enumerate}
	\item\label{item:unconstrained_necessary_prox_dir}
		Pick $u \in \mathcal C_\varphi(\xb) \cap \mathbb{S}$.
		If $\xb$ is a local minimizer of $\varphi$ in direction $u$, then $0 \in \proxsub \varphi(\xb; u)$.
	\item\label{item:unconstrained_necessary_prox_nondir}
		If $\xb$ is a local minimizer of $\varphi$, then
		$0 \in \proxsub \varphi(\xb)$, and 
		$0 \in \proxsub \varphi(\xb; u)$ is valid for each $u \in \mathcal C_\varphi(\xb) \cap \mathbb S$.
	\end{enumerate}
\end{theorem}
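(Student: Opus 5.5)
The plan is to read assertion~\ref{item:unconstrained_necessary_prox_dir} off the definitions, because directional local minimality is exactly the kind of inequality that defines the directional proximal pre-subdifferential at the origin.

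\textbf{Step 1 (well-posedness).} Let $u\in\mathcal C_\varphi(\bar x)\cap\mathbb S$ and let $\bar x$ be a local minimizer of $\varphi$ in direction $u$. By \cref{prop:primal_fo_NOC}\,\ref{item:primal_fo_NOC_dir} we have $\d\varphi(\bar x)(u)\geq 0$. Since $u\in\mathcal C_\varphi(\bar x)$, we also have $\d\varphi(\bar x)(u)\leq 0$. Hence $\d\varphi(\bar x)(u)=0$. In particular $|\d\varphi(\bar x)(u)|<\infty$, so $\proxpresub\varphi(\bar x;u)$ and $\proxsub\varphi(\bar x;u)$ are defined.

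\textbf{Step 2 (membership in the pre-subdifferential).} Choose $\varepsilon,\rho>0$ such that \eqref{eq:dir_loc_min} holds, and take any $\gamma>0$, say $\gamma:=1$. For every $x\in\bar x+\mathbb B_{\varepsilon,\rho}(u)$ we then get
\[
\varphi(x)\geq\varphi(\bar x)\geq\varphi(\bar x)+\innerprod{0}{x-\bar x}-\gamma\norm{x-\bar x}^2 .
\]
This is precisely the defining condition for $0\in\proxpresub\varphi(\bar x;u)$.

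\textbf{Step 3 (membership in the subdifferential).} The second requirement in the definition of $\proxsub\varphi(\bar x;u)$ is $\d\varphi(\bar x)(u)=\innerprod{0}{u}=0$, which Step~1 provides. Alternatively, \cref{rem:dir_proxsub}\,\ref{item:zero_in_proxpresub_and_proxsub} applies directly since $u\in\mathcal C_\varphi(\bar x)$. Either way $0\in\proxsub\varphi(\bar x;u)$, which proves assertion~\ref{item:unconstrained_necessary_prox_dir}.

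\textbf{Assertion~\ref{item:unconstrained_necessary_prox_nondir}.} Let $\bar x$ be a local minimizer. Then there is $\varepsilon>0$ with $\varphi(x)\geq\varphi(\bar x)$ on $\B_\varepsilon(\bar x)$, so the same inequality as in Step~2, with any $\gamma>0$, gives $0\in\proxsub\varphi(\bar x)$. By \cref{lem:relation}, $\bar x$ is a local minimizer of $\varphi$ in every direction $u\in\mathbb S$. Assertion~\ref{item:unconstrained_necessary_prox_dir} therefore gives $0\in\proxsub\varphi(\bar x;u)$ for each $u\in\mathcal C_\varphi(\bar x)\cap\mathbb S$.

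The only point needing care is Step~1, namely that $\d\varphi(\bar x)(u)$ is finite and equal to $0$, so that the directional objects are defined and the equality constraint holds. Everything else is immediate from the definitions.
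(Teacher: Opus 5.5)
Your proof is correct. Part~\ref{item:unconstrained_necessary_prox_nondir} coincides with the paper's argument; part~\ref{item:unconstrained_necessary_prox_dir} takes a different route.

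After obtaining $\d\varphi(\bar x)(u)=0$ from \cref{prop:primal_fo_NOC}\,\ref{item:primal_fo_NOC_dir}, as the paper also does, you check the defining inequality of $\proxpresub\varphi(\bar x;u)$ at $x^*=0$ directly from \eqref{eq:dir_loc_min}. This is the same one-line argument the paper itself uses for $0\in\proxsub\varphi(\bar x)$ in part~\ref{item:unconstrained_necessary_prox_nondir}. The paper instead invokes \cref{prop:primal_so_NOC}\,\ref{item:primal_so_NOC_dir} to get $\d^2\varphi(\bar x;0)(u)\geq 0>-\infty$. It then applies the characterization \eqref{eq:alternative_rep_subproxpre}, which is imported from the literature.

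Your route is self-contained and needs neither the second subderivative nor that external characterization. The paper's route gives some structural insight in exchange. It shows that, for critical $u$, the directional proximal condition is just the statement $\d^2\varphi(\bar x;0)(u)>-\infty$. Hence it is a consequence of, and weaker than, the primal second-order condition $\d^2\varphi(\bar x;0)(u)\geq 0$. This reformulation is also what makes the condition checkable via subderivatives alone, as exploited in \cref{prop:unconstrained_necessary_prox_relation} and in the example with $\varphi(x)=x\sqrt{|x|}$.
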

\begin{proof}
	For the proof of assertion~\ref{item:unconstrained_necessary_prox_dir},
	let $u \in\mathcal C_\varphi(\xb)\cap\mathbb S$ be arbitrarily chosen,
	and let $\xb$ be a local minimizer of $\varphi$ in direction $u$.
	From \cref{prop:primal_fo_NOC}\,\ref{item:primal_fo_NOC_dir} 
	and the definition of the critical cone we get 
	\begin{equation*}
		\d\varphi(\xb)(u) = 0 = \langle 0, u\rangle.
	\end{equation*}
	Furthermore, 
	\cref{prop:primal_so_NOC}\,\ref{item:primal_so_NOC_dir}
	yields $\d^2\varphi(\xb; 0)(u)\geq 0 > -\infty$.
	Applying \eqref{eq:alternative_rep_subproxpre}, hence, shows $0 \in \proxsub\varphi(\xb; u)$.
	
	For the proof of assertion~\ref{item:unconstrained_necessary_prox_nondir}, 
	let $\xb$ be local minimizer of $\varphi$.
	Then we find $\varepsilon>0$ such that $\varphi(x)\geq\varphi(\bar x)$ holds for all $x\in\mathbb B_\varepsilon(\bar x)$.
	Hence, for arbitrary $\gamma>0$,
	\begin{equation*}
		\forall x \in \B_\varepsilon(\xb)\colon\quad
		\varphi(x) \geq \varphi(\xb) \geq \varphi(\xb) - \gamma \|x - \xb\|^2
	\end{equation*}
	is true, and $0 \in \proxsub \varphi(\xb)$ follows.
	The second part of assertion~\ref{item:unconstrained_necessary_prox_nondir} immediately follows 
	from assertion~\ref{item:unconstrained_necessary_prox_dir} via \cref{lem:relation}.
\end{proof}

In the following remark, we discuss the value of potential necessary optimality conditions 
in terms of the geometric directional proximal (pre-)subdifferential.

\begin{remark}\label{rem:unconstrained_necessary_prox_geo}
	Let $\varphi\colon\R^n \to \barR$ be an lsc function, and
	fix $\xb \in \R^n$ such that $|\varphi(\xb)| < \infty$. 
	\begin{enumerate}
		\item\label{item:unconstrained_necessary_prox_geo_subder}
			Keeping \eqref{eq:relation_prox_normals_and_subgradients} in mind, 
			\cref{thm:unconstrained_necessary_prox} can be equivalently reformulated
			when replacing $\proxsub\varphi(\xb; u)$ by $\proxsubgeo\varphi(\xb; (u, \d\varphi(\xb)(u)))$,
			yielding a counterpart of \cref{thm:dual_NOC_limiting}.
			This is pretty much in contrast to \cref{sec:dual_conditions_lim}
			where the conditions from \cref{cor:dual_NOC_limiting}
			are, in general, weaker than those ones from \cref{thm:dual_NOC_limiting},
			see \cref{ex:geometric_better} again. 
		\item\label{item:unconstrained_necessary_prox_geo_other}
			Pick $u\in\mathcal C_\varphi(\bar x)\cap\mathbb S$ 
			such that $|\d\varphi(\bar x)(u)|<\infty$ holds.
			According to \cref{lem:geometric_dir_prox_sub_trivial},
			$\nu:=\d\varphi(\bar x)(u)$ is the only reasonable
			scalar in $D\varphi(\bar x)(u)$ such that
			$0\in\proxsubgeo\varphi(\bar x;(u,\nu))$ provides a meaningful condition.
		
		\item
			Pick $u \in \mathcal C_\varphi(\xb) \cap \mathbb S$
			and assume that $\bar x$ is a local minimizer of $\varphi$ in direction $u$.
			According to \cref{rem:dir_proxsub}\,\ref{item:zero_in_proxpresub_and_proxsub},
			\eqref{eq:relation_pre_prox_subgradients}, 
			and \cref{thm:unconstrained_necessary_prox}\,\ref{item:unconstrained_necessary_prox_dir},
			condition $0 \in \proxpresubgeo\varphi(\xb; (u, \nu))$
			provides a necessary optimality condition for each $\nu \in D\varphi(\xb)(u)$,
			and the most restrictive among them is obtained for $\nu:=\d\varphi(\bar x)(u)$
			which then just equals $0\in\proxsub\varphi(\bar x;u)$.
			
			Observe that \eqref{eq:estimates_proximal_prenormals} yields 
			\begin{equation*}
				\{x^* \in \R^n \,|\, \langle x^*, u \rangle < \nu\}
				\subset
				\proxpresubgeo\varphi(\xb; (u, \nu))
				\subset
				\{x^* \in \R^n \,|\, \langle x^*, u\rangle \leq \nu\}
			\end{equation*}
			for each $\nu\in D\varphi(\bar x)(u)$.
			In the case where $\nu\in D\varphi(\bar x)(u)\setminus\{\d\varphi(\bar x)(u)\}$
			is considered, we find $\nu>\d\varphi(\bar x)(u)$ and, thus,
			\begin{align*}
				\proxpresubgeo\varphi(\xb; (u,\d\varphi(\bar x)(u)))
				&\subset
				\{x^* \in \R^n \,|\, \innerprod{ x^*}{ u } \leq \d\varphi(\bar x)(u)\}
				\\
				&\subset
				\{x^*\in\R^n \,|\, \innerprod{x^*}{u} < \nu\}
				\subset
				\proxpresubgeo\varphi(\xb; (u, \nu)).
			\end{align*}
			Hence,
			the inclusion $0\in\proxpresubgeo\varphi(\bar x;(u,\nu))$
			does not provide a necessary condition of any reasonable strength
			as it is always valid whenever $\d\varphi(\bar x)(u)\geq 0$ holds,
			i.e., it does not provide more information 
			than \cref{prop:primal_fo_NOC}\,\ref{item:primal_fo_NOC_dir}.
	\end{enumerate}
\end{remark}
According to \cref{lem:prox_in_dir_lim_subdiff}, the necessary optimality conditions
stated in \cref{thm:unconstrained_necessary_prox} are sharper than those ones from
\cref{thm:dual_NOC_limiting} and \cref{cor:dual_NOC_limiting}. 
However, while the directional limiting tools have been shown to enjoy
full calculus, see \cite{BenkoGfrererOutrata2019,LongWangYang2017,NgocVanVan2026} again,
only some calculus rules for directional proximal normal cones and subdifferentials are known yet,
see \cite{BenkoGfrererYeZhangZhou2023,BenkoGfrererYeZhangZhou2026,BenkoMehlitz2023}.
In this regard, \cref{thm:dual_NOC_limiting} and \cref{cor:dual_NOC_limiting} may turn out to be much more practically useful
than \cref{thm:unconstrained_necessary_prox}, at least from today's point of view.
In fact, \cref{thm:unconstrained_necessary_prox} motivates a more detailed study on calculus rules
for directional proximal normal cones and subdifferentials.
In the nondirectional case, associated calculus rules for the proximal subdifferential can 
already be found in \cite{Bounkhel2016,MordukhovichNam2007}.
Typically, one is interested in optimality conditions for so-called composite optimization problems of type
\begin{equation}\label{eq:composite_problem}\tag{CP}
	\min\limits_x\quad f(x) + g(F(x))\quad\text{s.t.}\quad x\in\R^n
\end{equation}
where $f\colon\R^n\to\R$ and $F\colon\R^n\to\R^m$ are sufficiently smooth
and $g\colon\R^m\to\overline\R$ is proper and lsc.
Note that the composite framework covers the one of constrained optimization
as it is possible to choose $g$ to be the indicator function of a closed set.
In order to apply \cref{thm:dual_NOC_limiting,thm:unconstrained_necessary_prox} as well as \cref{cor:dual_NOC_limiting},
or the results from \cref{sec:primal_conditions},
to the situation at hand,
sum and chain rules are required for the underlying variational tools.
For (analytic and geometric) directional limiting subdifferentials as well as first and second subderivatives,
these can, indeed, be found in \cite{BenkoGfrererOutrata2019,LongWangYang2017} and \cite{BenkoMehlitz2023}, respectively.

Let us also note that, formally, \cref{thm:dual_NOC_limiting} can be seen as a corollary
of \cref{thm:unconstrained_necessary_prox} via \cref{lem:prox_in_dir_lim_subdiff}.
However, we decided to include a proof of \cref{thm:dual_NOC_limiting} 
to keep \cref{sec:dual_conditions_lim} self-contained.

In the following result, we compare the nondirectional and directional necessary optimality conditions
from \cref{thm:unconstrained_necessary_prox}\,\ref{item:unconstrained_necessary_prox_nondir}.
As it turns out, both conditions are equivalent.
This result is essentially different from the associated one for the limiting subdifferential,
see \cref{prop:unconstrained_necessary_lim_relation}.

\begin{proposition}\label{prop:unconstrained_necessary_prox_relation}
	Let $\varphi\colon\R^n\to\barR$ be an lsc function,
	and fix $\bar x\in\R^n$ such that $|\varphi(\bar x)|<\infty$.
	Then the following conditions are equivalent.
	\begin{enumerate}
		\item\label{item:NOC_prox_undir}
			We have $0\in\proxsub\varphi(\bar x)$.
		\item\label{item:NOC_prox_dir}
			For each $u\in\mathcal C_\varphi(\bar x)\cap\mathbb S$, we have $0\in\proxsub\varphi(\bar x;u)$.
	\end{enumerate}
\end{proposition}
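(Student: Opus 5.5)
For the implication [\ref{item:NOC_prox_undir}$\Longrightarrow$\ref{item:NOC_prox_dir}], I would argue directly from the definitions. Assume $0\in\proxsub\varphi(\bar x)$. Then \cref{rem:prox_sub_diff_and_2nd_subder} yields $\d\varphi(\bar x)(u)\geq 0$ for all $u\in\R^n$. Hence each $u\in\mathcal C_\varphi(\bar x)\cap\mathbb S$ satisfies $\d\varphi(\bar x)(u)=0=\innerprod{0}{u}$. In particular $|\d\varphi(\bar x)(u)|<\infty$, so the directional objects are well defined. Now \cref{rem:dir_proxsub}\,\ref{item:estimates_dir_proxsub} (the second inclusion) gives $0\in\proxsub\varphi(\bar x;u)$. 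Alternatively, I could note that the quadratic minorization on $\B_\varepsilon(\bar x)$ also holds on the smaller set $\bar x+\B_{\varepsilon,\rho}(u)$.

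The converse [\ref{item:NOC_prox_dir}$\Longrightarrow$\ref{item:NOC_prox_undir}] is the substantive direction. My plan is to observe that $0\in\proxsub\varphi(\bar x)$ is equivalent to the second-order growth-type condition $\varphi(x)\geq\varphi(\bar x)-\gamma\nnorm{x-\bar x}^2$ near $\bar x$. This is the same as saying that $\bar x$ is a local minimizer, in the sense of second-order growth with nonpositive constant, of $\psi:=\varphi+\gamma\nnorm{\cdot-\bar x}^2$ for some $\gamma$. The difficulty is that $\gamma$ may depend on the direction, so I would argue by contradiction with a compactness argument in the spirit of \cref{lem:so_growth_condition}. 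Suppose $0\notin\proxsub\varphi(\bar x)$. Then for every $k$ there is $x_k\in\B_{1/k}(\bar x)$ with $\varphi(x_k)<\varphi(\bar x)-k\nnorm{x_k-\bar x}^2$, and necessarily $x_k\neq\bar x$. Define $t_k,u_k$ as in \eqref{eq:standard_transfer_directional_setting}, pass to a subsequence with $u_k\to u\in\mathbb S$, and obtain
\[
	\frac{\varphi(\bar x+t_ku_k)-\varphi(\bar x)}{\tfrac12 t_k^2}<-2k\to-\infty .
\]
This shows $\d^2\varphi(\bar x;0)(u)=-\infty$, and also, since $t_k\cdot(-k t_k)\to$ a nonpositive limit divided by $t_k$, that $\d\varphi(\bar x)(u)\leq\liminf(-kt_k)\leq 0$, i.e., $u\in\mathcal C_\varphi(\bar x)\cap\mathbb S$.

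With this $u$, condition \ref{item:NOC_prox_dir} gives $0\in\proxsub\varphi(\bar x;u)\subset\proxpresub\varphi(\bar x;u)$. By \eqref{eq:alternative_rep_subproxpre} this means $\d^2\varphi(\bar x;0)(u)>-\infty$, which contradicts the above. I could also avoid \eqref{eq:alternative_rep_subproxpre} by directly using the constants $\varepsilon,\rho,\gamma$: for large $k$ we have $t_ku_k\in\B_{\varepsilon,\rho}(u)$, so $\varphi(x_k)\geq\varphi(\bar x)-\gamma t_k^2$, which is incompatible with $\varphi(x_k)<\varphi(\bar x)-kt_k^2$ once $k>\gamma$.

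The main subtlety I expect is a well-definedness issue. $\proxsub\varphi(\bar x;u)$ is only defined when $|\d\varphi(\bar x)(u)|<\infty$, while the critical direction $u$ might have $\d\varphi(\bar x)(u)=-\infty$. I would read \ref{item:NOC_prox_dir} as implicitly requiring finiteness, since membership of $0$ forces $\d\varphi(\bar x)(u)=\innerprod{0}{u}=0$. Under this reading the argument goes through unchanged, and the case $\mathcal C_\varphi(\bar x)\cap\mathbb S=\emptyset$ is covered automatically: the contradiction argument always produces some critical unit direction.
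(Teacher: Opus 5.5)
Your proposal is correct and follows the paper's proof essentially step for step. For (a)$\Rightarrow$(b) you use \cref{rem:prox_sub_diff_and_2nd_subder} together with \cref{rem:dir_proxsub}\,\ref{item:estimates_dir_proxsub}. For (b)$\Rightarrow$(a) you use the same contradiction sequence with $\varphi(x_k)<\varphi(\bar x)-k\nnorm{x_k-\bar x}^2$, the normalization \eqref{eq:standard_transfer_directional_setting} yielding a critical unit direction $u$ with $\d^2\varphi(\bar x;0)(u)=-\infty$, and \eqref{eq:alternative_rep_subproxpre}. Your handling of the case $\d\varphi(\bar x)(u)=-\infty$, where $0\notin\proxsub\varphi(\bar x;u)$ by definition, is exactly the paper's, and your direct argument with the constants $\varepsilon,\rho,\gamma$ is a valid way to avoid \eqref{eq:alternative_rep_subproxpre}.
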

\begin{proof}
	In order to prove implication [\ref{item:NOC_prox_undir}$\Longrightarrow$\ref{item:NOC_prox_dir}],
	we pick $u\in\mathcal C_\varphi(\bar x)\cap\mathbb S$
	arbitrarily.
	Then, according to \cref{rem:prox_sub_diff_and_2nd_subder}, 
	we find $\d\varphi(\bar x)(u)=0$, and 
	\cref{rem:dir_proxsub}\,\ref{item:estimates_dir_proxsub}
	already yields $0\in\proxsub\varphi(\bar x;u)$.
	
	Let us verify the converse implication 
	[\ref{item:NOC_prox_dir}$\Longrightarrow$\ref{item:NOC_prox_undir}].
	Hence, suppose that $0\notin\proxsub\varphi(\bar x)$ is valid,
	i.e., the condition stated in \ref{item:NOC_prox_undir} is violated.
	Then we find a sequence $x_k\to\bar x$ such that
	\[
		\forall k\in\N\colon\quad
		\varphi(x_k)<\varphi(\bar x) - k\nnorm{x_k-\bar x}^2.
	\]
	Note that this yields $x_k\neq\bar x$ for all $k\in\N$.
	Let us define sequences $t_k$ and $u_k$ as in \eqref{eq:standard_transfer_directional_setting}.
	Then $t_k\downarrow 0$ and, along a subsequence (without relabeling),
	$u_k\to u$ for some $u\in\mathbb S$.
	Furthermore, $x_k=\bar x+t_ku_k$ is valid for each $k\in\N$.
	Observe that
	\[
		\d\varphi(\bar x)(u)
		\leq
		\liminf\limits_{k\to\infty}\frac{\varphi(\bar x+t_ku_k)-\varphi(\bar x)}{t_k}
		\leq
		\liminf\limits_{k\to\infty}-k\,t_k
		\leq 0,
	\]
	yielding $u\in\mathcal C_\varphi(\bar x)\cap\mathbb S$.
	If $\d\varphi(\bar x)(u)=-\infty$, $0\notin\proxsub\varphi(\bar x;u)$
	follows by definition of the directional proximal subdifferential.
	If $\d\varphi(\bar x)(u)\in(-\infty,0]$,
	then taking into account \eqref{eq:alternative_rep_subproxpre}
	and 
	\[
		\d^2\varphi(\bar x;0)(u)
		\leq
		\liminf\limits_{k\to\infty}\frac{\varphi(\bar x+t_ku_k)-\varphi(\bar x)}{\frac12 t_k^2}
		\leq
		\liminf\limits_{k\to\infty}-2k
		=
		-\infty
	\]
	shows $0\notin\proxpresub\varphi(\bar x;u)$ 
	and, thus, $0\notin\proxsub\varphi(\bar x;u)$.
	Hence, the condition stated in \ref{item:NOC_prox_dir} is violated.
\end{proof}

The computation of $\proxsub\varphi(\bar x;u)$ for fixed $u\in\mathcal C_\varphi(\bar x)$
merely requires, via \eqref{eq:alternative_rep_subproxpre}, the calculation of $\d\varphi(\bar x)(u)$ and $\d^2\varphi(\bar x;\cdot)(u)$, 
while computing $\proxsub\varphi(\bar x)$ might be a laborious task
as this necessitates to inspect the full variational behavior of $\varphi$ locally around $\bar x$.
Particularly, the directional version of the necessary optimality condition
from \cref{thm:unconstrained_necessary_prox}\,\ref{item:unconstrained_necessary_prox_nondir}
may be used as a suboptimality condition so that the full computation of $\proxsub\varphi(\bar x)$
is not required. 

Our final example illustrates that suboptimality conditions deduced from \cref{thm:unconstrained_necessary_prox}
can be sharper than the ones resulting from \cref{thm:dual_NOC_limiting} and, thus, \cref{cor:dual_NOC_limiting}.

\begin{example}
	Consider the continuous function $\varphi\colon\R\to\R$ given by
	\begin{equation*}
		\forall x \in \R\colon\quad
		\varphi(x) := x \sqrt{|x|},
	\end{equation*}
	as well as $\xb := 0$ and the direction $u:=-1$. 
	Clearly, $\bar x$ is not a local minimizer of $\varphi$ in direction $u$.
	Since
	\begin{equation*}
		\d\varphi(\xb)(u) =
		\liminf_{t\downarrow 0, u'\to u}
		\frac{\varphi(tu')}{t}
		=
		\liminf_{t\downarrow 0,u'\to u}
		\sqrt t\,u'\sqrt{|u'|}
		= 
		0
	\end{equation*}
	holds, we have $u \in \mathcal C_\varphi(\bar x) \cap \mathbb{S}$.
	We note that $\varphi$ is continuously differentiable on $\R\setminus\{0\}$
	with derivative 
	$\varphi'(x) = \frac{3}{2}\sqrt{|x|}$ for all $x\in\R\setminus\{0\}$. 
	Set $t_k:=\frac1k$ and $x_k:=\bar x + t_ku=-\tfrac1k$ for each $k\in\N$.
	For each $k \in \N$, we find
	$\regsub \varphi(x_k) = \{\frac{3}{2}\frac{1}{\sqrt k}\}$, which yields 
	\begin{align*}
		(\tfrac32\tfrac{1}{\sqrt k},-1)
		&\in
		\regnormal_{\epi\varphi}((x_k,\varphi(x_k)))
		=
		\regnormal_{\epi\varphi}((\bar x,\varphi(\bar x)) + t_k(u,\nu_k)),
	\end{align*}
	where we used
	\[
		\nu_k:=\frac{\varphi(\bar x+t_ku)-\varphi(\bar x)}{t_k}.
	\]
	Observing that $\nu_k\to 0 = \d\varphi(\bar x)(u)$ holds,
	the above shows	$0 \in \subgeo \varphi(\xb; (u,\d\varphi(\bar x)(u)))$.
	However, for the second subderivative, we find
	\begin{equation*}
		\d^2\varphi(\bar x; 0)(u)
		\leq
		\liminf_{t\downarrow 0}
		-2 \frac{\sqrt t}{t}
		=
		-\infty,
	\end{equation*}
	which certificates $0 \not \in \proxsub \varphi(\xb; u)$ via \eqref{eq:alternative_rep_subproxpre}.
	Hence, \cref{thm:unconstrained_necessary_prox}\,\ref{item:unconstrained_necessary_prox_dir}
	indeed certificates that $\bar x$ is not a local minimizer of $\varphi$ in direction $u$,
	while \cref{thm:dual_NOC_limiting}\,\ref{item:dual_NOC_limiting_dir} and, thus,
	\cref{cor:dual_NOC_limiting}\,\ref{item:dual_NOC_limiting_dir_cor}
	do not.
\end{example}

\subsection{Conditions in terms of regular subgradients}\label{sec:dual_conditions_reg}

Following our argumentation from \cref{sec:dual_conditions_prox},
given an lsc function $\varphi\colon\R^n\to\overline{\R}$, a point $\bar x\in\R^n$ such that $|\varphi(\bar x)|<\infty$,
and some direction $u\in\R^n$ such that $|\d\varphi(\bar x)(u)|<\infty$,
one may refer to
\begin{equation}\label{eq:reg_presub}
	\left\{
		x^* \in \R^n		
		\ \middle|\ 
		\begin{aligned}		
		&\exists \varepsilon,\rho > 0,\forall x \in \xb + \B_{\varepsilon, \rho}(u)\colon\\
		&\quad\varphi(x) \geq \varphi(\xb) + \langle x^*, x - \xb\rangle - \oo(\|x - \xb\|)
		\end{aligned}
	\right\}
\end{equation}
as the directional regular pre-subdifferential of $\varphi$ in direction $u$ at $\bar x$.
For $u:=0$, this set corresponds to $\regsub\varphi(\bar x)$.
In the general case, a simple calculation reveals that the set in \eqref{eq:reg_presub} is the same as
\[
	\{x^*\in\R^n\,|\,\d\varphi(\bar x)(u)\geq\innerprod{x^*}{u}\},
\]
i.e., this construction delivers essentially the same information as the first subderivative $\d\varphi(\bar x)$.
Particularly, first-order necessary optimality conditions derived via this approach will turn out to be
equivalent to the ones in \cref{prop:primal_fo_NOC},
so a separate investigation is not required.
Particularly, we note that the above discussion yields
\[
	\regsub\varphi(\bar x)
	=
	\bigcap_{u\in\mathbb S}\{x^*\in\R^n\,|\,\d\varphi(\bar x)(u)\geq\innerprod{x^*}{u}\},
\]
and
\[
	0\in\regsub\varphi(\bar x)
	\quad\Longleftrightarrow\quad
	\forall u\in\mathbb S\colon\,\d\varphi(\bar x)(u)\geq 0
\]
follows, see \cite[Theorem~10.1]{RockafellarWets1998} as well.

\section{Conclusions}\label{sec:conclusions}

This paper has been devoted to the derivation of 
first- and second-order necessary and sufficient optimality conditions
for directional local minimizers associated with lsc functions.
Some special emphasis has been put on the investigation of Fermat-type conditions based on the
analytic and geometric directional proximal and limiting subdifferentials,
some recently introduced tools for the generalized differentiation of nonsmooth functions.
Simple examples have been presented in order to illustrate differences between 
the derived optimality conditions,
and these examples also underline that the derived directional conditions allow for a finer analysis
than their nondirectional counterparts.

In order to apply the findings of this paper to realistic scenarios in mathematical optimization,
like the composite model problem \eqref{eq:composite_problem},
sum and chain rules for the variational tools used in the aforementioned optimality conditions
have to be available.
On the one hand,
for subderivatives, these can, exemplary, be found in \cite{BenkoMehlitz2023},
and \cite{BenkoGfrererOutrata2019,LongWangYang2017,NgocVanVan2026} offer a comprehensive overview of calculus rules
for the (analytic and geometric) directional limiting subdifferential.
On the other hand, 
calculus rules for the (analytic and geometric) directional proximal subdifferential
are rarely available. Some first results can be found in \cite{BenkoMehlitz2023}.
Recalling that the obtained necessary optimality conditions in terms of directional proximal subgradients
are sharper than those ones based on directional limiting subgradients,
in our future research, 
we aim to study the calculus of directional proximal normals cones and subdifferentials in detail.
Some inspiration can be taken from \cite{Bounkhel2016,MordukhovichNam2007},
where the calculus of the nondirectional proximal subdifferential is addressed,
and \cite{BenkoGfrererYeZhangZhou2023,BenkoGfrererYeZhangZhou2026,BenkoMehlitz2023},
where some first calculus rules for the directional proximal normal cone can be found.
We may also take \cite{BenkoMehlitz2022} into account,
were a rather general approach to calculus has been promoted 
which applies to diverse variational objects.



\end{document}